\documentclass[12pt]{article}
 \usepackage[english]{babel}
  \usepackage{amsmath,amsfonts,amssymb,amsthm}
  \usepackage{bbm}

\usepackage{paralist,subfigure}

\usepackage[latin1]{inputenc}
\usepackage[OT1]{fontenc}

\usepackage[active]{srcltx} 

\tolerance = 1500 
\hoffset = .3cm
\voffset = -.5cm
 
\textwidth = 15cm
\textheight = 23cm
\topmargin = 0pt
\headheight = 0pt
\headsep = 0pt
\oddsidemargin = 0pt
\evensidemargin = 0pt
\marginparwidth = 10pt
\marginparsep = 10pt
\pagenumbering{arabic}

\usepackage{amsfonts}
\usepackage{amsmath}
 \usepackage{psfrag}
  \usepackage{epsfig}

\def\R{\mathbb R}

\def\C{\mathbb C}

\def\supp{{\mathrm {supp}}}
\def\epsilon{\varepsilon}
\def\e{\varepsilon}

\let\e=\varepsilon
\let\vp=\varphi
\let\t=\tilde
\let\ol=\overline
\let\ul=\underline

\let\.=\cdot
\let\0=\emptyset
\let\mc=\mathcal

\def\solose{\ \Rightarrow\ }
\def\sse{\ \Leftrightarrow\ }

\def\O{\Omega}

\def\di{\displaystyle}

\newcommand{\su}[2]{\genfrac{}{}{0pt}{}{#1}{#2}}

\let\di=\displaystyle
\def\trait (#1) (#2) (#3){\vrule width #1pt height #2pt depth #3pt}

\newcommand{\beq}{\begin{equation}}
\newcommand{\eeq}{\end{equation}}
\newcommand{\baa}{\begin{array}}
\newcommand{\eaa}{\end{array}}
\newcommand{\ba}{\begin{eqnarray}}
\newcommand{\ea}{\end{eqnarray}}

\newtheorem{theorem}{Theorem}[section]

\newtheorem{proposition}[theorem]{Proposition}

\newtheorem{lemma}[theorem]{Lemma}

\theoremstyle{definition}

\newtheorem{remark}[theorem]{Remark}

\title{\bf Fisher-KPP propagation in the presence of a line: further effects}
 \author{Henri {\sc Berestycki}$^{\hbox{a }}$,  
 Jean-Michel {\sc Roquejoffre}$^{\hbox{b }}$, Luca {\sc Rossi}$^{\hbox{c }}$\\
 \footnotesize{$^{\hbox{a }}$ Ecole des Hautes Etudes en Sciences Sociales}\\
\footnotesize{ CAMS, 54, bd Raspail F-75270 Paris, France}\\
 \footnotesize{$^{\hbox{b }}$ Institut de Math\'ematiques de Toulouse,
 Universit\'e Paul Sabatier}\\
 \footnotesize{118 route de Narbonne, F-31062 Toulouse Cedex 4, France}\\
 \footnotesize{$^{\hbox{c }}$Universit\`a degli Studi di Padova}\\
 \footnotesize{Dipartimento di Matematica, Via Trieste, 63 -
 35121 Padova, Italy}\\
 }

\date{}

\begin{document}

\maketitle

\begin{abstract}
\noindent  This paper is a continuation of \cite{BRR2} where a new model of
biological invasions in the plane directed by a line was introduced.  Here we
include new features such as transport and reaction terms on the line. 
Their interaction with the pure diffusivity in the plane is quantified in terms
of enhancement of the propagation speed. We establish conditions that determine
whether the spreading speed exceeds the standard Fisher KPP invasion speed.
These conditions involve the ratio of the diffusivities on the line and in the
field, the transport term and the reactions. We derive the asymptotic behaviour
for large diffusions or large transports. We also  discuss the biological
interpretation of these findings. 
\end{abstract}

\numberwithin{equation}{section}

\noindent{\bf Keywords:} KPP equations, reaction-diffusion system, fast
diffusion on a line, asymptotic speed of propagation.

\medskip

\noindent{\bf MSC:} 35K57, 92D25, 35B40, 35K40, 35B53.

\section{Introduction}

This paper is a sequel of \cite{BRR2} in which we introduced a new model to describe biological invasions in the
plane when a strong diffusion takes place on a line. The purpose of this model is to understand the effect of a line interacting with  a homogeneous environment.  This type of questions arises for instance in studying the propagation of diseases directed by roads \cite{Sig} or the movements of animal populations in the presence of pathways allowing for a more rapid movement. An example of the latter is provided by recent observations of movements of wolves  along seismic lines in Western Canada \cite{McK}.   In \cite{BRR2} we derived the asymptotic speed of spreading in 
the direction of the line. There we showed that  for low diffusion the line has no effect, whereas,
past a threshold, the line enhances global diffusion in the plane in the
direction of the line. Moreover, the propagation
velocity on the line increases indefinitely as the diffusivity on the line grows
to infinity.  

The goal of the present paper is to include new features in this model such as
transport and reaction or mortality on the road and to understand the resulting
new effects. Taking into account these new elements is important when
discussing propagation directed by roads or along water stream networks.
These require new developments that turn out to make more
transparent the case considered in \cite{BRR2} as well.  

 Consider the line $\{ (x,0)\,:\, x\in \R\}$ in
the plane $\R^2$; we will refer to the plane as ``the field" and the line as
``the road".  For a single species, we consider a system that combines the
density of this population in the field $v(x,y,t)$ and  the density on the line
$u(x,t)$. The main questions that we want to understand are the effects of a
transport term $q\partial_xu$ as well as a decay rate  $\rho\geq0$ on the road. 
An invasive species that can be carried by streams of water is an example of a
situation where such additional terms are required.
The transport and the decay rate are considered here to be uniform, that is, $q$
and $\rho$ are constant. 
Due to the symmetry of the problem, we can restrict our analysis to the upper
half-plane $\O:=\{(x,y)\ :\ x\in\R,\ y>0\}$.
The equations for $u$ and $v$ then read:
\begin{equation}
\label{Cauchy-rho}
\begin{cases}
\partial_t u-D \partial_{xx} u+q\partial_x u+\rho u= \nu v(x,0,t)-\mu u &
x\in\R,\
t>0\\
\partial_t v-d\Delta v=f(v) & (x,y)\in\O,\ t>0\\
-d\partial_y v(x,0,t)=\mu u(x,t)-  \nu v(x,0,t) & x\in\R,\ t>0.
\end{cases}
\end{equation}
Here, $d,D,\mu,\nu>0$, $\rho\geq 0$, $q\in\R$ and $f\in C^1([0,+\infty))$
satisfies the usual KPP type assumptions:
\begin{equation}\label{fKPP} 
f(0)=f(1)=0,\quad f>0\text{ in }(0,1),\quad f<0\text{ in
}(1,+\infty),\quad f(s)\leq f'(0)s\text{ for }s>0.
\end{equation} 
Note that transport and pure decay only occur on the road and the question is to  know how these interact with the diffusivity and growth in the field.
We combine the system with the initial condition 
$$
\begin{cases}
u|_{t=0}=u_0 & \text{in }\R\\
v|_{t=0}=v_0 & \text{in }\O,
\end{cases}
$$
where $u_0$, $v_0$ are always assumed to be nonnegative and continuous.

Let $c_K$ denote the classical KPP invasion speed \cite{KPP} in the field:
$$
c_K=2\sqrt{df'(0)}.
$$
This is the asymptotic speed at which the population would spread in the open
space - i.e., when no line is
present (see \cite{AW}).

We say that \eqref{Cauchy-rho} admits 
the {\em asymptotic speeds of spreading} $w_*^\pm$ (in the directions
$\pm e_1=\pm(1,0)$)
if for any solution $(u,v)$ starting from a compactly
supported initial datum $(u_0,v_0)\not\equiv(0,0)$, and for all $\e>0$, the
following hold true:
$$\lim_{t\to+\infty}\sup_{\su{x<-(w_*^-+\e)t}{y\geq0}}
|(u(x,t),v(x,y,t))|=0,\qquad\lim_{t\to+\infty}\sup_{\su{x>(w_*^++\e)t}{y\geq0}}
|(u(x,t),v(x,y,t))|=0,$$
$$\forall a>0,\quad\
\lim_{t\to+\infty}\sup_{\su{-(w_*^--\e)t<x<(w_*^+-\e)t}{
0\leq y<a}}|(u(x,t),v(x,y,t))-(U,V(y))|=0,$$
where $(U,V)=(U,V(y))$
is the unique positive, bounded, stationary solution of \eqref{Cauchy-rho}.

Thus, the first step for proving the existence of the asymptotic speeds of
spreading consists in deriving a uniqueness result for the stationary system.
We call it a Liouville-type result.
The existence of the asymptotic speeds of spreading implies in particular
that
%
%
$$\forall c\notin[-w_*^-,w_*^+],\quad 
\lim_{t\to+\infty}(u(x+ct,t),v(x+ct,y,t))=(0,0),$$
$$\forall c\in(-w_*^-,w_*^+),\quad 
\lim_{t\to+\infty}(u(x+ct,t),v(x+ct,y,t))=(U,V(y)),$$
locally uniformly in $(x,y)\in\ol\Omega$. 
This kind of weaker formulation is sometimes used in the literature as the
definition of the asymptotic speed.


\subsection{Statement of the main results}
\label{sec:main}

In \cite{BRR2}, we proved that, in the case $q,\rho=0$, \eqref{Cauchy-rho}
admits
asymptotic speeds of spreading $w_*^\pm$. We further
identified a threshold situation: $D/d=2$ 
below which $w_*^\pm=c_K$ and above which $w_*^+=w_*^->c_K$.
The first question we address in the present paper is how this threshold is
modified
by the presence of the transport and decay terms $q$ and $\rho$. This question
is solved by the following

\begin{theorem}\label{thm:rho}
Under the assumption \eqref{fKPP}, problem \eqref{Cauchy-rho} admits 
asymptotic speeds of spreading $w_*^\pm$ (in the directions $\pm e_1$).
Moreover, if $\di\frac{D}d\leq2+\frac\rho{f'(0)}\mp\frac q{\sqrt{df'(0)}}$,  
then $w^\pm_*=c_K$, else $w_*^\pm>c_K$.
\end{theorem}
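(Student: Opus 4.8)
The plan is to follow the now-classical two-step route for establishing asymptotic speeds of spreading for such road–field systems, adapting the argument of \cite{BRR2} to accommodate the extra terms $q\partial_x u$ and $\rho u$.

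\medskip

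\noindent\textbf{Step 1: Liouville-type result for the stationary system.} First I would prove that the only nonnegative, bounded, stationary solution of \eqref{Cauchy-rho} other than $(0,0)$ is a pair $(U,V(y))$ with $U>0$ constant and $V$ bounded, positive, depending only on $y$, with $V(y)\to 1$ as $y\to+\infty$; moreover this solution is unique. Existence follows by a sub/supersolution argument (take the constant $(U_0, 1)$ with $U_0$ large as a supersolution, and a small multiple of a suitable compactly-supported-in-$y$ profile as a subsolution). For uniqueness, the key point is that any bounded stationary solution is $x$-independent: one shows the solution is bounded below by a positive constant (using the \SMP{} and the KPP nonlinearity, arguing that $\liminf v>0$ via a sliding/translation argument along $x$), and then a comparison principle in the moving-frame-free setting forces $x$-independence; once $x$-independent, $u$ solves $\rho u = \nu v(0) - \mu u$, a scalar relation, and $V$ solves a scalar ODE in $y$ with a Robin condition coupling to $u$, whose positive bounded solution is unique by ODE phase-plane analysis together with $f$ being KPP. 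The transport term $q\partial_x u$ disappears for $x$-independent solutions, so it plays no role here; the decay $\rho$ only shifts the algebraic relation between $U$ and $V(0)$.

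\medskip

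\noindent\textbf{Step 2: Spreading speeds via exponential solutions of the linearized system.} The upper bound on $w_*^\pm$ comes from constructing supersolutions of the form $(u,v)=(\gamma e^{-\lambda(x\mp ct)},\, \psi(y)e^{-\lambda(x\mp ct)})$ for the system linearized at $(0,0)$, i.e. with $f(v)$ replaced by $f'(0)v$. Plugging this ansatz in forces $\psi(y)=\psi(0)e^{-\sqrt{\lambda^2 + (f'(0)-c\lambda)/d}\,y}$ (one needs $\lambda^2 + (f'(0)-c\lambda)/d>0$, i.e. $c$ below the field's own decay rate for that $\lambda$), and then the road equation plus the Robin boundary condition reduce to a single algebraic dispersion relation $F_\pm(\lambda,c)=0$ relating $\lambda$, $c$, and the parameters $d,D,\mu,\nu,\rho,q$. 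Define $c_\pm^* := \inf_{\lambda>0}\{c : F_\pm(\lambda,c)=0\text{ has an admissible root}\}$ (the infimum over admissible decay rates $\lambda$ of the minimal speed for which a decaying exponential supersolution exists). The upper bound $w_*^\pm\le c_\pm^*$ then follows by a comparison argument (the exponential is above any compactly supported datum for suitable $\gamma$), and the matching lower bound $w_*^\pm\ge c_\pm^*$ by constructing compactly supported, spreading subsolutions — here one uses the Liouville result of Step 1 to pin down the limit profile $(U,V)$ behind the front, exactly as in \cite{BRR2}. The existence of asymptotic speeds in the precise (sup-norm) sense stated then follows by combining these bounds with parabolic estimates and the Liouville result.

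\medskip

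\noindent\textbf{Step 3: Identifying the threshold.} The last step is the explicit computation showing $c_\pm^* = c_K$ precisely when $\frac Dd \le 2 + \frac\rho{f'(0)} \mp \frac q{\sqrt{df'(0)}}$. One analyzes the dispersion relation $F_\pm(\lambda,c)=0$ near $c=c_K=2\sqrt{df'(0)}$: at $c=c_K$ the "field branch" of admissible exponents has the double root $\lambda_K=\sqrt{f'(0)/d}$, and one checks whether the road equation can be satisfied at $(\lambda_K,c_K)$ with a nonnegative balance. The road constraint at this point, after substituting the Robin condition, becomes (up to positive factors) an inequality of the form $D\lambda_K^2 - q\lambda_K + \rho \le \nu\cdot(\text{something})-\mu + d\,\partial_y(\cdot)$ that, after simplification, reads exactly $\frac Dd \le 2 + \frac{\rho}{f'(0)} \mp \frac{q}{\sqrt{df'(0)}}$ (the sign of $q$ flips between the $+$ and $-$ directions because $x\mapsto -x$ sends $q\mapsto -q$). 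When this holds, no exponential supersolution beats $c_K$ and $w_*^\pm=c_K$; when it fails, one produces an admissible root with $c>c_K$, giving $w_*^\pm>c_K$.

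\medskip

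\noindent\textbf{Main obstacle.} The genuinely delicate part is Step 1, the Liouville-type classification of bounded stationary solutions: proving that every bounded nonnegative stationary solution is necessarily $x$-independent (and bounded below away from zero) requires care, since the system is a coupled road–field problem and the transport term $q\partial_x u$ breaks the $x$-reflection symmetry of the stationary equation, so one must rule out, e.g., stationary "transition" solutions connecting $(0,0)$ to $(U,V)$ along the road. The standard tools are a sliding argument combined with the strong maximum principle and the structure of the KPP nonlinearity, but making the coupling through the boundary condition work with $\rho,q\neq 0$ is where the real work lies; everything else is a (careful but essentially routine) adaptation of \cite{BRR2}.
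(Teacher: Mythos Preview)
Your overall plan matches the paper's: a Liouville-type result, then exponential solutions of the linearisation for upper and lower speed bounds, then the algebraic threshold computation. Steps 2 and 3 are essentially what the paper does in Section \ref{sec:velocity}, including the Rouch\'e-based compactly supported subsolution from \cite{BRR2} and the check that $(0,c_K/2d)\in\mc{S}(c_K)$ reduces exactly to the stated inequality.

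The one substantive difference is Step 1, and it bears on what you flag as the ``main obstacle''. You propose to prove directly, via a sliding argument, that every bounded positive stationary solution is $x$-independent, and you worry that the drift $q\partial_x u$ breaks reflection symmetry and might allow transition-type stationary profiles. The paper sidesteps this entirely. Lemma \ref{lem:trapped} shows that the large-time $\liminf$ and $\limsup$ of any solution from a nontrivial bounded datum are trapped between two \emph{$x$-independent} positive stationary solutions; $x$-independence of these extremal limits comes for free from translation invariance in $x$ (one compares with monotone-in-time solutions starting from a constant supersolution above and a translated compactly supported field subsolution below). This reduces the Liouville question to uniqueness for the one-dimensional problem \eqref{y-mortality}, which the paper handles (Proposition \ref{pro:y}) by multiplying $-dV''=f(V)$ by $V'$ and integrating to obtain a scalar equation $\theta(V(0))=0$ with $\theta$ strictly increasing on $(0,1)$; only \eqref{hyp:f} is needed, not monotonicity of $f(s)/s$. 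So the delicate sliding-with-drift step you anticipate is never needed, and the Liouville part is in fact the \emph{easier} part for the pure-mortality system \eqref{Cauchy-rho}. Your route would presumably also work, but is heavier than necessary here.
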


In the case $q=\rho=0$, we recover the result of \cite{BRR2}. Theorem
\ref{thm:rho} shows that a mortality term $\rho>0$ always rises the threshold
for $D/d$ after which the effect of the road is felt. The threshold
for the enhancement of the speed towards right, $w_*^+$, is decreased if the
transport term $q$ is positive and increased if it is negative.
\\

We will actually carry out the whole study of system \eqref{Cauchy-rho} in the
case where the mortality term $-\rho u$ is replaced by a more general reaction
term $g(u)$. This term can be used to model situations where reproduction
occurs on the road as well. The general system reads
\begin{equation}\label{Cauchy}
\begin{cases}
\partial_t u-D \partial_{xx} u+q\partial_x u= \nu v(x,0,t)-\mu u+g(u) & x\in\R,\
t>0\\
\partial_t v-d\Delta v=f(v) & (x,y)\in\O,\ t>0\\
-d\partial_y v(x,0,t)=\mu u(x,t)-  \nu v(x,0,t) & x\in\R,\ t>0.
\end{cases}
\end{equation}
The hypotheses are: $f,g\in C^1([0,+\infty))$ and satisfy
\begin{equation}\label{hyp:f}
f(0)=f(1)=0,\qquad f>0\text{ in }(0,1),\qquad f<0\text{ in }(1,+\infty),
\end{equation}
\begin{equation}\label{hyp:g}
g(0)=0,\qquad \exists S>0,\ g(S)\leq0,
\end{equation}
\begin{equation}\label{concave}
s\mapsto f(s)/s,\quad s\mapsto g(s)/s\quad\text{are nonincreasing}. 
\end{equation}
Condition \eqref{concave} holds if $f$ and $g$ are concave. It implies that 
$f$ and $g$ are of KPP type:
$f(s)\leq f'(0)s$ and $g(s)\leq g'(0)s$. 

\begin{theorem}\label{thm:main}
Assume that \eqref{hyp:f}-\eqref{concave} hold. Then:
\begin{enumerate}[(i)]
 \item {\em(Liouville-type result).}
Problem \eqref{Cauchy} admits a unique, positive, bounded, 
stationary solution $(U,V)$. Moreover, $U\equiv\text{constant}$
and $V\equiv V(y)$.
\item {\em(Spreading).} 
Problem \eqref{Cauchy} admits asymptotic speeds of spreading $w_*^\pm$. 
\item {\em(Spreading velocity).} 
If $\di\frac{D}d\leq2-\frac{g'(0)}{f'(0)}\mp\frac q{\sqrt{df'(0)}}$,  
then $w^\pm_*=c_K$. Otherwise $w_*^\pm>c_K$.
\end{enumerate}
\end{theorem}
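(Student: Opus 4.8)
The plan is to follow the architecture of \cite{BRR2}, treating the three items in sequence, with the general reaction term $g$ on the road replacing the pure mortality. For item (i), the Liouville-type result, I would first exploit the structure of the system: any bounded positive stationary solution $(U,V)$ solves an elliptic system, and I would show that it must be $x$-independent. The natural tool is a sliding/translation argument combined with the strong maximum principle for the cooperative system (the coupling through the boundary terms $\mu u - \nu v$ has the right sign). Once translation invariance in $x$ is established, $U$ is a constant and $V=V(y)$, and the system reduces to an ODE in $y$ coupled with an algebraic relation; uniqueness of the positive bounded solution then follows from the concavity hypothesis \eqref{concave} via a standard sub/supersolution sweeping argument (comparing two solutions, using that $f(s)/s$ and $g(s)/s$ are nonincreasing to rule out a nontrivial quotient at its extremum). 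Existence of $(U,V)$ comes from monotone iteration between $0$ and a large constant supersolution, using \eqref{hyp:f} and \eqref{hyp:g} to produce the ordered pair of sub- and supersolutions.

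For item (ii), the existence of the asymptotic speeds of spreading, the skeleton is: (a) a comparison principle for \eqref{Cauchy}, which lets one trap solutions between those of linearized problems; (b) construction of exponential supersolutions of the form $e^{-\alpha(x\mp ct)}$ times a suitable profile in $y$, which gives the upper bound $w_*^\pm \le c^\pm$ for an explicitly defined $c^\pm$; (c) a lower bound obtained by building compactly supported, slowly moving subsolutions and invoking the Liouville result from (i) to identify the limit as $(U,V(y))$ in the wake. Steps (a) and (c) are where the new terms $q\partial_x u$ and $g(u)$ must be handled; $g(u)\le g'(0)u$ keeps everything in the KPP regime so the linearization at $(0,0)$ governs the speed, and the transport $q$ simply shifts the relevant dispersion relation asymmetrically — hence the $\mp q/\sqrt{df'(0)}$ in the statement.

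The heart of the matter, and the source of item (iii), is the analysis of the linearized system at $(0,0)$ and its dispersion relation. Seeking solutions $e^{-\alpha(x\mp ct)}\psi(y)$ with $\psi(y)=e^{-\beta y}$ forces $\beta=\beta(\alpha)$ through the field equation ($d(\alpha^2+\beta^2)+df'(0)=\dots$, i.e. the usual KPP parabola) and then an equation coupling $\alpha$, $c$, $q$, $g'(0)$ through the road equation and the boundary condition. I would define $c^\pm(\alpha)$ as the minimal $c$ for which this system has a solution and set $c^\pm=\inf_{\alpha>0}c^\pm(\alpha)$; the claim is $w_*^\pm=\max(c_K,c^\pm)$, and the threshold $D/d \le 2 - g'(0)/f'(0) \mp q/\sqrt{df'(0)}$ is exactly the condition under which the infimum is attained in a regime where $c^\pm\le c_K$ (so the field invasion dominates) versus $c^\pm>c_K$ (so the road genuinely accelerates propagation). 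I expect the main obstacle to be the careful case analysis of this dispersion relation — showing the infimum is achieved, that it is governed by $c_K$ precisely on one side of the stated threshold, and matching the resulting upper bound with a subsolution giving the same lower bound. The transport term breaks the left-right symmetry and the $g'(0)$ term modifies the effective decay balance on the road, so both must be tracked through every inequality; reconciling the two one-sided constructions (super- and subsolutions) at the threshold value is the delicate point.
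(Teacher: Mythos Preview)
Your overall strategy matches the paper's architecture for items (ii) and (iii): exponential supersolutions for the upper bound, compactly supported moving subsolutions for the lower bound, and analysis of the linearised dispersion relation to locate the threshold. For (iii) the paper carries out essentially the computation you describe, organised geometrically in the $(\beta,\alpha)$ plane (a curve $\Sigma(c)$ from the road equation meeting a circle $\Gamma(c)$ from the field equation) rather than as a minimisation over $\alpha$; the two viewpoints are equivalent, and the condition $w_*=c_K$ reduces exactly to $(0,c_K/2d)\in\mc{S}(c_K)$, which unwinds to the stated inequality.

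For (i), however, your route differs from the paper's in a way that matters. You propose to prove $x$-independence of a stationary solution directly by a sliding argument on the elliptic system. On the full line with no decay hypothesis, sliding needs an initial ordering to get started, and you do not indicate how you would obtain one; the paper sidesteps this entirely. It argues \emph{dynamically}: any bounded positive stationary solution, viewed as a time-independent solution of the parabolic problem, is trapped by Lemma~\ref{lem:trapped} between two $x$-independent stationary solutions obtained as monotone limits from constant (hence $x$-independent) initial data. Uniqueness of positive bounded solutions of the reduced one-dimensional system~\eqref{y-stationary} (Theorem~\ref{thm:y}) then collapses the sandwich and yields $x$-independence for free. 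That uniqueness is not proved by a sweeping/quotient argument either, but by an integration-by-parts identity: multiply $-dV_1''=f(V_1)$ by $V_2$ and vice versa, subtract, integrate, and use that $s\mapsto f(s)/s$ and $s\mapsto g(s)/s$ are nonincreasing to force a contradiction via the boundary terms. Your proposed sweeping argument would have to deal with the fact that on the half-line $[0,+\infty)$ the extremum of the quotient need not be attained in the interior.

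One technical point you pass over in (ii): the compactly supported subsolutions moving at speeds just below $w_*$ are built (following \cite{BRR2}) by first restricting to a wide strip with Dirichlet condition at $y=L$, then locating complex exponents via Rouch\'e's theorem near the tangency point $(\beta_*,\alpha_*)$, and taking real parts. This is where most of the analytic work in the lower bound sits, and the parameters $q$ and $g'(0)$ must be tracked through that perturbation.
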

Notice that, in Theorem \ref{thm:rho}, $s\mapsto f(s)/s$ is not assumed to be
nonincreasing. This hypothesis is only required in the proof of statement (i) of
Theorem \ref{thm:main}. An alternative hypothesis is given in
Proposition \ref{pro:Liouville2} below. It is to be noted that the
Liouville-type
result may fail if $s\mapsto f(s)/s$ is not nonincreasing. In this case, we 
still get a convergence result, but for initial
data satisfying $u_0\leq\nu/\mu$, $v_0\leq1$, and the convergence
holds to the minimal positive, stationary solution (see Remark
\ref{rem:noLiouville}). Furthermore, the
spreading speeds are still well defined and statements (ii) and
(iii) of Theorem \ref{thm:main} hold true. 

When there is no transport on the road,
$w^-_*$ and $w^+_*$ coincide and the threshold condition given by
Theorem~\ref{thm:rho} part
(iii) becomes
$$
\frac{D}d\leq2-\frac{g'(0)}{f'(0)}.
$$
This allows us to understand the - somewhat mysterious - factor 2 in the
threshold condition of \cite{BRR2}. Indeed, when $g\equiv0$, we recover the
condition $D\leq2d$ of \cite{BRR2}. We further note that, if $f'(0)=g'(0)$, the
threshold
condition
becomes $D=d$ - which is what one would have expected. Thus, when the same reaction 
occurs on the road and in the field, the effect of the road is felt as
soon as the diffusivity there is larger than in the field. The factor $2$ of
\cite{BRR2} is therefore explained by the absence of reaction on the road.
\\

In the last part of the paper, we investigate the limits of the asymptotic
speeds of spreading $w_*^\pm$ as the diffusion and the transport on the road
tend to $+\infty$. We find the following asymptotic behaviours:
\begin{theorem}\label{thm:limits}
As functions of the variables $D$ and $q$ respectively,
$w_*^\pm$ satisfy
$$\lim_{D\to\infty}\frac{w_*^\pm}{\sqrt D}=h,\qquad
\lim_{q\to\pm\infty}\frac{w_*^\pm}{|q|}=\begin{cases}
                                   k & \text{if }g'(0)<\mu\\
                                   1 & \text{if }g'(0)\geq\mu,\\
                                  \end{cases}
$$
with $h>0$ independent of $q$ and $0<k<1$ independent of $D$.
\end{theorem}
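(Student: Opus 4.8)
The strategy is to reduce the asymptotics of $w_*^\pm$ to the behaviour of an explicit family of linearized dispersion relations, which is the standard mechanism by which spreading speeds are computed for KPP systems. Recall that in the companion paper \cite{BRR2}, and as will be established in the body of the present paper, the speed $w_*^\pm$ is characterized variationally: $w_*^\pm = \min_{\alpha>0} \frac{k^\pm(\alpha)}{\alpha}$, where for each decay rate $\alpha$ in the moving frame one seeks exponential super/subsolutions of the linearized system of the form $u\sim e^{-\alpha(x\mp ct)}$, $v\sim e^{-\alpha(x\mp ct)}\phi(y)$ with $\phi(y)=e^{-\gamma y}$ and $\gamma=\gamma(\alpha,c)\ge 0$ determined by the field equation, i.e. $d(\alpha^2+\gamma^2)+c\alpha+f'(0)=0$ so $\gamma=\sqrt{\alpha^2+(c\alpha+f'(0))/d}$ (with the sign convention making $\gamma\ge0$), while the road equation and the boundary coupling produce the scalar relation
\begin{equation}\label{dispersion-limits}
D\alpha^2 \mp q\alpha + g'(0) + c\alpha - \mu + \frac{\mu\nu}{d\gamma+\nu}=0.
\end{equation}
The first step, therefore, is to recall (or quote from the proof of Theorem \ref{thm:main}) this characterization, including the fact that the minimum is attained and that $w_*^\pm>0$; the analysis of \eqref{dispersion-limits} is then a calculus problem in the parameters $D$ and $q$.

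For the limit $D\to\infty$: the plan is to rescale. Setting $\alpha=\beta/\sqrt D$ and $c=w\sqrt D$ in \eqref{dispersion-limits}, one finds that as $D\to\infty$ the terms $g'(0)$ and $-\mu$ become negligible, $\gamma=\sqrt{\alpha^2+(c\alpha+f'(0))/d}\to 0$ so that $\frac{\mu\nu}{d\gamma+\nu}\to\mu$, and the relation collapses to $\beta^2 + w\beta = 0$ at leading order --- which is degenerate, so one must keep the next order term. The correct scaling turns out to make $\gamma$ comparable to $\alpha$, forcing $c\alpha\sim\alpha^2$, i.e. $c\sim\sqrt D\cdot(\text{const})$; carrying the expansion of $\frac{\mu\nu}{d\gamma+\nu}=\mu-\frac{\mu d\gamma}{\nu}+o(\gamma)$ one order further produces, after dividing by $\alpha$, a limiting equation $D\alpha + c = \frac{\mu d}{\nu}\sqrt{\alpha^2 + c\alpha/d}+o(1)$ (using $f'(0)/d = o(\alpha^2)$ in this regime). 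In the variable $\beta=\alpha\sqrt D$, $w=c/\sqrt D$ this becomes a $D$-independent relation $\beta + w = \frac{\mu d}{\nu\sqrt D}\sqrt{\beta^2+w\beta}\to 0$; here one sees that in fact the dominant balance is $\gamma\sim\alpha$ with the coupling term contributing at order $\sqrt D$, which gives $w_*^\pm/\sqrt D \to h:=\min_{\beta>0}\frac{k(\beta)}{\beta}$ for the limiting dispersion relation $d\beta^2 + k\beta = \mu d\sqrt{\beta^2+k\beta/d}$ (the $\mp q\alpha$ term scales as $q/\sqrt D\to 0$, so $h$ is independent of $q$, and $h>0$ since the limiting relation is genuinely superlinear). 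The key technical point is to justify the interchange of $\min_\alpha$ and $\lim_D$: this is done by the usual squeezing argument, constructing for every $\e>0$ exponential super- and subsolutions of the full (nonlinear, via the KPP bound $f(v)\le f'(0)v$, $g(u)\le g'(0)u$) system with speed $(h\pm\e)\sqrt D$ for $D$ large, which bounds $w_*^\pm$ between $(h-\e)\sqrt D$ and $(h+\e)\sqrt D$.

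For the limit $q\to+\infty$ (the case $q\to-\infty$ for $w_*^-$ being symmetric), the plan is again to rescale, now by $\alpha=\beta/q$, $c=wq$. Then in \eqref{dispersion-limits} one has $\gamma=\sqrt{\alpha^2+(c\alpha+f'(0))/d}\to\sqrt{w\beta/d}/q\cdot(\ldots)$ --- more precisely $d\gamma\to 0$, so $\frac{\mu\nu}{d\gamma+\nu}\to\mu$, and $D\alpha^2\to 0$; dividing by $\alpha=\beta/q$ and letting $q\to\infty$ gives the limiting relation $-1 + w + \frac{g'(0)-\mu+\mu}{0}$... which again needs care. The honest computation: multiply \eqref{dispersion-limits} by $q$ is wrong-dimensioned; instead divide by $\alpha$ to get $D\alpha \mp q + \frac{g'(0)-\mu}{\alpha} + c + \frac{1}{\alpha}\cdot\frac{\mu\nu}{d\gamma+\nu}=0$, and with $\alpha=\beta/q$, $c=wq$ this reads $D\beta/q - q + (g'(0)-\mu)q/\beta + wq + \frac{q}{\beta}\cdot\frac{\mu\nu}{d\gamma+\nu}=0$; dividing by $q$: $-1 + w + \frac{1}{\beta}\big(g'(0)-\mu+\frac{\mu\nu}{d\gamma+\nu}\big)+o(1)=0$. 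Now $\gamma\to 0$, so $\frac{\mu\nu}{d\gamma+\nu}\to\mu$ and the bracket tends to $g'(0)$, giving the limiting relation $w = 1 - \frac{g'(0)}{\beta}$ if $g'(0)<\mu$, for which $\min_{\beta>0}w(\beta)/\beta$... — this is where the dichotomy $g'(0)\gtrless\mu$ enters. When $g'(0)\ge\mu$ the road reaction alone already produces a front of speed $q$ (roughly, $u$ is dominated by $\partial_t u + q\partial_x u = g'(0)u - \mu u + \ldots$ with $g'(0)-\mu\ge 0$, a pure transport-growth equation whose spreading speed in the $+x$ direction is exactly $q$), so $w_*^+/q\to 1$; when $g'(0)<\mu$ the effective scalar problem on the road has the additional coupling feedback $\mu\nu/(d\gamma+\nu)$ which in the limit contributes a full $\mu$, exactly cancelling $-\mu$ and leaving only $g'(0)$, yielding a genuine KPP-type minimization with value $k\in(0,1)$ strictly. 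The main obstacle, and the part requiring the most care, is precisely this case distinction: one must show that when $g'(0)<\mu$ the minimum in $\min_\beta k(\beta)/\beta$ for the limit relation is attained at a finite positive $\beta$ and gives $k<1$ (ruling out escape of the minimizer to $0$ or $\infty$), and that when $g'(0)\ge\mu$ no such finite minimizer exists and the infimum $1$ is the correct speed --- this last point is delicate because the infimum is then \emph{not} attained in the limiting relation, so the squeezing argument must be set up directly at the level of the $q$-dependent problem (e.g. comparing with the scalar road equation from below and with a suitable supersolution from above) rather than by naive passage to the limit. As before, one closes the argument with super/subsolution constructions at speed $(k\pm\e)q$ (resp. $(1\pm\e)q$) to sandwich $w_*^+$, using the KPP structure to reduce to the linearized system, and noting that $k$ is independent of $D$ because the $D\alpha^2$ term scales as $D/q^2\to 0$ and drops out of the limiting relation.
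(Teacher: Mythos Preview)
Your overall strategy --- rescale the dispersion relation and pass to a limiting relation --- is sound and is essentially what the paper does. But the execution contains a decisive error that invalidates both limit computations.

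The error is your repeated claim that the transverse decay rate $\gamma$ (the paper's $\beta$) tends to $0$ under the rescalings. It does not. With $\alpha\mapsto\alpha/\sqrt D$, $c\mapsto w\sqrt D$ (and likewise with $\alpha\mapsto\alpha/q$, $c\mapsto\kappa q$), the product $c\alpha$ remains $O(1)$, so $\gamma^2=(c\alpha-f'(0))/d-\alpha^2$ tends to a finite, generically nonzero limit. (Your formula $d(\alpha^2+\gamma^2)+c\alpha+f'(0)=0$ also has sign errors --- the field equation is $c\alpha-d(\alpha^2+\gamma^2)=f'(0)$ --- but even your own expression gives $\gamma=O(1)$.) Hence the coupling term $\mu\nu/(d\gamma+\nu)$ does \emph{not} reduce to $\mu$, the constants $g'(0)$ and $\mu$ are \emph{not} negligible, and your ``limiting relations'' for both $h$ and $k$ are wrong. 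In particular, your $q\to\infty$ analysis would yield a $k$ depending only on $g'(0)$ and $f'(0)$, whereas the correct value depends on $\mu,\nu,d$ as well.

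The paper avoids this by keeping two free variables $(\beta,\alpha)$ and treating the problem geometrically. For $D\to\infty$: after rescaling $c\mapsto\sqrt D\,c$, $\alpha\mapsto\alpha/\sqrt D$, the first equation of the system becomes the fixed curve $\Sigma(c)$ (with $D=1$, $q=0$), while the circle $\Gamma(c)$ degenerates to the parabola $P(c):\alpha=(f'(0)+d\beta^2)/c$; the constant $h$ is the threshold $c$ at which $\Sigma(c)$ and $P(c)$ first touch, and involves the full coupling $\chi(\beta)=d\mu\beta/(\nu+d\beta)$. For $q\to\infty$: setting $c=\kappa q$ and expanding the condition $\Gamma(\kappa q)\cap\Sigma(\kappa q)\neq\emptyset$ gives, for $\kappa<1$, the limiting inequality $(\chi(\beta)-g'(0))/(1-\kappa)\geq(f'(0)+d\beta^2)/\kappa$ for some $\beta\geq0$, whence
\[
k=\Big(1+\max_{\beta\geq0}\frac{\chi(\beta)-g'(0)}{f'(0)+d\beta^2}\Big)^{-1}.
\]
The dichotomy $g'(0)\gtrless\mu$ then appears naturally: since $\sup_\beta\chi(\beta)=\mu$, the maximum above is positive precisely when $g'(0)<\mu$.
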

The expression of $k$ is explicitly given in the proof of the theorem 
in Section \ref{sec:largeq}.


\subsection{Biological interpretation}
\label{sec:bio}

What we are aiming at understanding here is how transport,
diffusivity
and reaction on the road combine to yield a spreading speed larger than
$c_K$, the KPP invasion speed in the open field (without the road). 
For definiteness, let us consider propagation to the right (that is, in the
direction $e_1$).

In the case when the reaction on the road consists in a pure mortality term
$-\rho u$, Theorem \ref{thm:rho} asserts that the threshold for
$D/d$ after which the effect of the road is felt grows linearly in $\rho$.
This means that the larger the $\rho$, the more likely the road has no
effect on the overall propagation.
%
When $\rho=0$, the threshold condition reads
$$\di\frac{D}d>2-\di\frac q{\sqrt{df'(0)}}=2\left(1-\frac{q}{c_K}\right).
$$
 The above formula  can also be written as
$$
 q > c_K\left(1 - \frac{D}{2d}\right).
$$
A consequence of this formula is that a transport $q$ larger than
$c_K$ is sufficient to enhance the overall propagation, no matter what the
diffusivity ratio is. This explains for instance how a river may help the
invasion of a parasite
\cite{Plant}. It may seem at first sight a natural effect that a drift term
larger than $c_K$ on the road will enhance the overal invasion speed and bring
it above $c_k$. However, it should be noted that in the situation we are looking
at, on the road alone - without the exchange terms with the field - there would
be extinction of the population even with a large $q$. The global spreading
speed results from a rather delicate interaction between the field and the road.
Therefore, it is remarkable that the threshold is precisely $q=c_K$.
If $q\leq c_K(1 -D/2d)$ then the spreading is with the usual
KPP velocity. This yields an interesting interpretation when considering the
propagation against the direction of the transport, that is, when $q<0$. For
$-q$ large enough, the asymptotic speed of spreading in the direction $e_1$ in
the field is the KPP invasion speed. For instance, asymptotic propagation
upstream against a
river flow in the neighbouring field is unaffected by the river, but downstream
propagation, in the direction of the flow, can be significantly enhanced.

Theorem \ref{thm:limits} asserts that the spreading speed approaches a
portion of the speed $q$ of the flow when the latter is very large. This portion
is equal to $1$ only if there is a sufficiently large reaction $g$ on the road.
It is worthwhile to note that the condition on $g$ only involves the rate
$\mu$ at which the individuals leave the line.

Another phenomenon for which our model is relevant is the dynamics of a
population that favours disturbed habitats\footnote{The authors thank Mark Lewis
for having brought this issue to their attention.}.
It is known that certain invasive plants, some weeds in particular, thrive in
disturbed habitats, such as cultivated fields or along roads
\cite{disturbance1}.  It has been observed that the presence of
disturbances increases their speed of spreading \cite{disturbance2}. 
A model has been proposed in [2] to describe this type of phenomena and in
particular the propagation of the ``unscented chamomile'' in North America. This
is an invasive weed widely distributed in croplands, pastures and infrastructure
(road edges). 
In such cases, the disturbance, represented by the road, provides better
environmental conditions than the rest of the territory
(more diffuse light, bare ground that permit seedling establishment), rather
than a greater diffusivity. This
results in having $d=D$ and $g>f$ in our model.
If one actually has $g'(0)>f'(0)$ then Theorem \ref{thm:main} implies that
there is enhancement of the invasion speed, which is in agreement with the
observations.
Notice that if $g'(0)\geq2f'(0)$ then the enhancement always
occurs, whatever the diffusivity ratio is. We plan to study the effect due to
propagation through a road crossing a globally unfavourable area in a separate
note.


\subsection{Organization of the paper}
\label{sec:plan}

The paper is organized as follows.
In the next section, we start with recalling the results of \cite{BRR2}
concerning the well-posedness of the Cauchy problems and the comparison
principles. 
Then, we show that the large time limits of solutions emerging
from non-trivial, bounded initial data are trapped
between two positive, bounded, stationary solutions which do not depend on $x$.
In Section \ref{sec:Liouville}, we derive the Liouville-type results for such
class of solutions, first for \eqref{Cauchy-rho} and then for \eqref{Cauchy}.
The large-time behaviour of solutions is thereby characterized, at least
locally uniformly in space.
In particular, Theorem \ref{thm:main} part (i) follows.

In section \ref{sec:velocity}, we investigate the
spreading property of solutions, that is, the set where they converge to $0$
or to the positive, stationary solution.
We give the proof of Theorem \ref{thm:main} parts (ii) and (iii) insisting on
the main differences with \cite{BRR2}.

Section \ref{sec:limits} is dedicated to the proof of Theorem \ref{thm:limits}.


\section{Long time behaviour}
\label{sec:ltb}

Throughout this section, we assume that \eqref{hyp:f}, \eqref{hyp:g} hold. The
unique solvability for the Cauchy problem associated with \eqref{Cauchy}
in the class of nonnegative, bounded functions follows from the same arguments
as in \cite{BRR2}, Proposition 3.1. There, the case $g\equiv0$ is treated, but
a slight modification of the arguments allows one to handle the presence of the
nonlinear term $g$. The existence result is obtained by first solving the Cauchy
problem for $u$ with $v\equiv0$ and then the one for $v$ with the
so obtained $u$. Repeating this procedure, starting with the new $v$, one
constructs an increasing, bounded sequence $(u_n,v_n)$ which eventually
converges to a solution of \eqref{Cauchy}.
The uniqueness result is a
consequence of the comparison principle between  sub and supersolutions (i.e.,
pairs of functions satisfying the system with the ``$=$'' replaced by ``$\leq$''
and ``$\geq$'' respectively). We recall two versions of the comparison
principle that will be used several time in the sequel:
\begin{proposition}[\cite{BRR2}]\label{comparison}
Let $(\underline u,\underline v)$ and $(\overline
u,\overline v)$ be respectively a 
subsolution bounded from above and a supersolution bounded
from below of
\eqref{Cauchy} satisfying $\underline u\leq
\overline u$ and $\underline v\leq
\overline v$ at $t=0$. Then, either $\underline u<\overline u$ and
$\underline v<\overline v$ for all $t$, or there exists $T>0$ such that 
$(\underline u,\underline v)=(\overline u,\overline v)$ for $t\leq T$.
\end{proposition}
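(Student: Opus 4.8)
The plan is to read the statement as a strong maximum principle for a cooperative linear parabolic system coupled through a Robin-type transmission condition on $\{y=0\}$. First I would set $w:=\overline u-\underline u$ and $z:=\overline v-\underline v$, so $w\ge0$ and $z\ge0$ at $t=0$, and subtract the differential inequalities defining the sub- and supersolution. Linearising the reaction terms by the mean value theorem, with $c(x,y,t):=\int_0^1 f'\big(\underline v+s(\overline v-\underline v)\big)\,ds$ and $\tilde c(x,t):=\int_0^1 g'\big(\underline u+s(\overline u-\underline u)\big)\,ds$ (locally bounded, since $f,g\in C^1$ and $\underline u,\overline u,\underline v,\overline v$ are continuous), the difference satisfies
\[
\partial_t w-D\partial_{xx}w+q\partial_x w+(\mu-\tilde c)\,w\ \ge\ \nu\,z(x,0,t)\quad\text{in }\R\times(0,\infty),
\]
\[
\partial_t z-d\Delta z-c\,z\ \ge\ 0\quad\text{in }\Omega\times(0,\infty),\qquad
-d\,\partial_y z(x,0,t)+\nu\,z(x,0,t)\ \ge\ \mu\,w(x,t)\quad\text{in }\R\times(0,\infty).
\]
Since $\mu,\nu>0$, the off-diagonal couplings enter with a favourable sign, so the system is cooperative.

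The first real step is to obtain the nonstrict inequalities $w\ge0$, $z\ge0$ for all $t>0$ — the weak comparison principle, which may simply be quoted from \cite{BRR2}. For completeness, one bounds the negative parts of $w$ and $z$ using exponential barriers $\eta\,e^{-\lambda(|x|+y)+\Lambda t}$ to absorb the behaviour at infinity (legitimate because $\underline u,\overline u$ and $\underline v,\overline v$ are bounded on the relevant side), applies the maximum principle in each equation treating the coupling as given data, and closes the estimate via a Gronwall inequality for the sum of the sup-norms (in space) of the negative parts of $w$ and $z$, which vanishes at $t=0$.

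For the dichotomy I would suppose the first alternative fails and split into cases. \emph{Case (a):} $z$ vanishes at some interior point $(x_0,y_0,t_0)$ with $y_0>0$, $t_0>0$. Then the parabolic strong maximum principle, applied to $\partial_t z-d\Delta z-c\,z\ge0$, $z\ge0$ on the connected cylinder $\Omega\times(0,t_0]$, gives $z\equiv0$ there; hence $z(x,0,t)=0$ and $\partial_y z(x,0,t)=0$ (a limit of vanishing interior derivatives) for $t\le t_0$, and the transmission condition forces $\mu w(x,t)\le0$, so $w\equiv0$ for $t\le t_0$ by Step 1. Thus $(\underline u,\underline v)=(\overline u,\overline v)$ for $t\le t_0$ and the second alternative holds with $T=t_0$. \emph{Case (b):} $z>0$ in $\Omega\times(0,\infty)$ but $z(x_0,0,t_0)=0$ for some $x_0\in\R$, $t_0>0$. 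Then the parabolic Hopf lemma yields $\partial_y z(x_0,0,t_0)>0$ (the outward normal on $\{y=0\}$ being $-e_2$), which, once $z(x_0,0,t_0)=0$ is used in the transmission condition, contradicts $-d\,\partial_y z(x_0,0,t_0)+\nu\,z(x_0,0,t_0)\ge\mu w(x_0,t_0)\ge0$. \emph{Case (c):} $z>0$ on $\overline\Omega\times(0,\infty)$ but $w(x_0,t_0)=0$ for some $x_0\in\R$, $t_0>0$. At this interior minimum of $w$ one has $\partial_t w(x_0,t_0)\le0$, $\partial_{xx}w(x_0,t_0)\ge0$, $\partial_x w(x_0,t_0)=0$, so the left-hand side of the first inequality is $\le0$ there (equivalently, the strong maximum principle would force $w\equiv0$), contradicting $\nu\,z(x_0,0,t_0)>0$. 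Since the failure of the first alternative forces one of (a), (b), (c), and (b) and (c) are impossible, case (a) occurs and we are done.

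I expect the main obstacle to be not the interior maximum principle as such, but its correct interplay with the nonstandard transmission condition coupling $u$ and $v$: one must justify, with the regularity actually available for (possibly generalised) sub/supersolutions, the passage ``$z\equiv0$ in $\Omega$ $\Rightarrow$ $\partial_y z=0$ on $\{y=0\}$'' and the Hopf alternative at a boundary point, and then route this information through the transmission condition so as to transfer the equality to $w$ (and symmetrically back to $z$). Handling the unboundedness of $\Omega$ in the weak-comparison step is routine via barriers, but must be set up so that the Gronwall closure of the coupled negative-part estimate is legitimate.
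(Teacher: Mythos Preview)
The paper does not give its own proof of this proposition: it is explicitly recalled from \cite{BRR2} (note the citation in the proposition heading and the sentence ``We recall two versions of the comparison principle\ldots'' preceding it), so there is no in-paper argument to compare against. Your plan is the standard route for a strong comparison principle in a cooperative parabolic system coupled through a Robin-type transmission condition, and it is essentially what one expects the proof in \cite{BRR2} to be: linearise the difference, establish the weak comparison $w,z\ge0$ by a barrier/Gronwall argument controlling behaviour at infinity, then upgrade to the dichotomy via the interior strong maximum principle for $z$, the Hopf lemma at $\{y=0\}$, and a pointwise minimum-principle argument for $w$. The case split (a)--(c) is correctly organised and the way you route the information through the transmission condition is the right mechanism. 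The only points that deserve care, and that you already flag, are the regularity needed to invoke Hopf at the boundary and to justify $\partial_y z=0$ once $z\equiv0$; for the classical sub/supersolutions intended here these are unproblematic.
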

The second version deals with subsolutions in a generalised sense (see
\cite{BL80} for a related notion).

\begin{proposition}[\cite{BRR2}]\label{gensub}
Let $E\subset\R^N\times(0,+\infty)$ and $F\subset\O\times
(0,+\infty)$ be two open sets and let $(u_1,v_1)$, $(u_2,v_2)$ be two
subsolutions of \eqref{Cauchy} bounded from above and satisfying
$$u_1\leq u_2\quad\text{on }(\partial E)\cap(\R^N\times(0,+\infty)),\qquad
v_1\leq v_2\text{ on }(\partial F)\cap(\O\times(0,+\infty)).$$
If the functions $\underline u$, $\underline v$ defined by
$$\underline u(x,t):=\begin{cases}
                      \max(u_1(x,t),u_2(x,t)) & \text{if }(x,t)\in\overline E\\
u_2(x,t) & \text{otherwise},
                     \end{cases}$$
$$\underline v(x,y,t):=\begin{cases}
                      \max(v_1(x,y,t),v_2(x,y,t)) & \text{if }(x,y,t)\in
\overline F\\
v_2(x,y,t) & \text{otherwise},
                     \end{cases}$$
satisfy
$$\underline u(x,t)>u_2(x,t)\ \Rightarrow\ \underline v(x,0,t)\geq v_1(x,0,t),$$
$$\underline v(x,0,t)>v_2(x,0,t)\ \Rightarrow\ \underline u(x,t)\geq u_1(x,t),$$
then, any supersolution $(\overline
u,\overline v)$ of \eqref{Cauchy} bounded from below and such that $\underline
u\leq
\overline u$ and $\underline v\leq
\overline v$ at $t=0$, satisfies $\underline u\leq\overline u$ and $\underline
v\leq\overline v$ for all $t>0$.
\end{proposition}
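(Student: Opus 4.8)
The plan is to recognise $(\underline u,\underline v)$ as a subsolution of \eqref{Cauchy} in a generalised, non-smooth sense, and then to compare it with the genuine supersolution $(\overline u,\overline v)$ by a first-contact argument, as in the proof of Proposition~\ref{comparison}; the only genuinely new ingredient is the pair of compatibility hypotheses. To begin, $\underline u$ and $\underline v$ are continuous: on $(\partial E)\cap(\R^N\times(0,+\infty))$ one has $u_1\leq u_2$, so $\max(u_1,u_2)=u_2$ there and the two branches in the definition of $\underline u$ agree across $\partial E$; likewise for $\underline v$ across $\partial F$. Setting $A:=\{u_1>u_2\}\cap E$ and $B:=\{v_1>v_2\}\cap F$, both open, one has $\underline u=u_1$ on $A$ and $\underline u=u_2$ off $A$, and $\underline v=v_1$ on $B$ and $\underline v=v_2$ off $B$; in particular $\underline u\geq u_2$ and $\underline v\geq v_2$ everywhere.

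Next I would check that, away from the (measure-zero) interfaces $\partial A,\partial B$, the pair $(\underline u,\underline v)$ satisfies the three relations of \eqref{Cauchy} with ``$\leq$'', where the coupling terms are now read as $\nu\,\underline v(x,0,t)$ in the first equation and $\mu\,\underline u(x,t)$ in the third. The $v$-equation carries no coupling, so it holds on $B$ (where $\underline v=v_1$) and off $B$ (where $\underline v=v_2$) by the subsolution property of $(u_1,v_1)$, resp.\ $(u_2,v_2)$. On $A$, where $\underline u=u_1>u_2$, the first compatibility hypothesis gives $\underline v(x,0,t)\geq v_1(x,0,t)$, so the inequality $\partial_t u_1-D\partial_{xx}u_1+q\partial_x u_1\leq\nu v_1(x,0,t)-\mu u_1+g(u_1)$ upgrades to the required one with $\nu\,\underline v$ in place of $\nu v_1$; off $A$ one uses $\underline u=u_2$ together with $v_2\leq\underline v$. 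Symmetrically, on $B\cap\{y=0\}$ the relation $\underline v=v_1>v_2$ and the second compatibility hypothesis give $\underline u\geq u_1$, hence $-d\partial_y v_1(x,0,t)\leq\mu u_1(x,t)-\nu v_1(x,0,t)\leq\mu\,\underline u(x,t)-\nu\,\underline v(x,0,t)$; off $B$ one uses $u_2\leq\underline u$. Thus $(\underline u,\underline v)$ is a classical subsolution of the ``glued'' system off the interfaces and is continuous across them with an upward-pointing kink, i.e.\ a generalised subsolution, the two compatibility hypotheses being exactly what is needed to reconcile the coupling when the active branch switches between $u_1,u_2$ or between $v_1,v_2$.

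Finally I would prove that such a generalised subsolution stays below any genuine supersolution $(\overline u,\overline v)$ with ordered initial data, arguing by contradiction. After the standard reduction to a strict supersolution, let $t^*>0$ be the first time at which $\underline u\leq\overline u$ or $\underline v\leq\overline v$ fails, and consider a contact point at $t=t^*$; there, equality forces $\overline u$ (resp.\ $\overline v$) to touch from above \emph{one} of the smooth functions $u_1,u_2$ (resp.\ $v_1,v_2$), namely the one attaining the maximum, so no derivative is ever taken through a kink. If the first loss occurs for $v$ at an interior point of $\O$, the parabolic strong maximum principle applied to $\overline v-v_i\geq0$, which solves a linear parabolic inequality with bounded zero-order term since $f\in C^1$, is contradicted. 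If it occurs on the road $\{y=0\}$, a loss for $u$ there forces, via the first equation and the first compatibility hypothesis, a simultaneous contact of $\overline v$ with the active $v_j$ at that boundary point (the $u$-equation at the contact point gives $\nu(\overline v-v_j)\leq0$, while $v_j\leq\underline v\leq\overline v$); the Hopf boundary-point lemma then gives $\partial_y(\overline v-v_j)>0$ there (unless $\overline v\equiv v_j$, a case ruled out by propagation/strictness), whereas the flux condition together with $\underline u\leq\overline u$ and the second compatibility hypothesis yield $-d\partial_y v_j\leq\mu\,\underline u-\nu\,v_j\leq\mu\,\overline u-\nu\,\overline v\leq-d\partial_y\overline v$, i.e.\ $\partial_y(\overline v-v_j)\leq0$ — a contradiction, exactly as in Proposition~\ref{comparison}. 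I expect this coupled analysis on $\{y=0\}$ — running the first-contact argument while $(\underline u,\underline v)$ is only a generalised subsolution and the $u$-equation, the $v$-equation and the flux condition interact — to be the only delicate point; the rest is bookkeeping with the definitions of $\underline u,\underline v$ and the two hypotheses.
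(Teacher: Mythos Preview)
The paper does not contain a proof of this proposition: it is simply quoted from \cite{BRR2}, so there is nothing in the present source to compare your argument against. That said, your proof is the natural one and is essentially what one finds in \cite{BRR2}: first observe that $(\underline u,\underline v)$ is continuous and that, on each side of the interfaces $\partial A,\partial B$, it satisfies the three inequalities of \eqref{Cauchy} with $\underline v(x,0,t)$ and $\underline u(x,t)$ in the coupling terms --- the two compatibility hypotheses being precisely what is needed to upgrade $v_i(x,0,t)$ to $\underline v(x,0,t)$ in the first equation and $u_j(x,t)$ to $\underline u(x,t)$ in the flux condition --- and then run a first-contact argument against $(\overline u,\overline v)$, using that at any contact point $(\underline u,\underline v)$ locally coincides with one of the smooth pairs $(u_i,v_j)$, so the strong maximum principle and the Hopf lemma apply. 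Your sketch of the boundary case (a $u$-contact forcing a simultaneous $v$-contact on $\{y=0\}$, then Hopf versus the flux inequality) is correct; just be sure to also treat the case of a $v$-contact on $\{y=0\}$ without a $u$-contact, which follows from the same chain $-d\partial_y v_j\leq\mu\,\underline u-\nu\,\underline v\leq\mu\,\overline u-\nu\,\overline v\leq-d\partial_y\overline v$ and yields the same contradiction.
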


We now derive a result which gives a preliminary information about the long time
behaviour of solutions.

\begin{lemma}\label{lem:trapped}
Let $(u,v)$ be the solution of \eqref{Cauchy} starting from a bounded
initial datum $(u_0,v_0)\not\equiv(0,0)$. Then, there
exist two positive, bounded, $x$-independent, stationary solutions $(U_1,V_1)$
and $(U_2,V_2)$ of
\eqref{Cauchy} such that
$$U_1\leq \liminf_{t\to+\infty}u(t,x)\leq\limsup_{t\to+\infty}u(t,x)\leq U_2,$$
$$V_1(y)\leq\liminf_{t\to+\infty}v(t,x,y)\leq
\limsup_{t\to+\infty}v(t,x,y)\leq V_2(y),$$
locally uniformly in $(x,y)\in\ol\Omega$.
\end{lemma}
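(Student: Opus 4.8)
The plan is to produce the upper barrier $(U_2,V_2)$ and the lower barrier $(U_1,V_1)$ separately, each as the large–time limit of a monotone auxiliary flow, and then invoke the comparison principle (Proposition 2.2) to squeeze $(u,v)$ between them. For the upper bound, I would first observe that the constant-in-$x$ pair obtained by solving the ODE system with $v$ frozen is a supersolution: more precisely, let $M$ be large enough that $M\ge \|v_0\|_\infty\vee 1$ and $M\ge$ the relevant equilibrium values, and consider $(\overline u(t),\overline v(t,y))$ the solution of \eqref{Cauchy} with $x$-independent initial data $(\overline u_0,\overline v_0)\equiv(C,M)$ for a suitably large constant $C$ (chosen so that $\nu M-\mu C+g(C)\le 0$, possible by \eqref{hyp:g} and the KPP bound $g(s)\le g'(0)s$ together with $\rho$-type decay; the point is that $g(C)\le 0$ for $C\ge S$). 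By uniqueness and the $x$-independence of the data, this solution stays $x$-independent for all $t$, and by the comparison principle it dominates $(u,v)$ for all $t>0$ once $C,M$ are also taken $\ge$ the sup of the initial data. The second step is to show $(\overline u(t),\overline v(t,y))$ is nonincreasing in $t$ and converges as $t\to+\infty$: monotonicity follows because, with these choices, the data is a supersolution, so comparing the flow at times $t$ and $t+s$ (using that the system is autonomous) gives monotone decrease; boundedness below by $0$ (by positivity) then yields a limit $(U_2,V_2(y))$, which by parabolic estimates is a stationary solution, is $x$-independent, bounded, and nonnegative. Positivity of $(U_2,V_2)$ is immediate since it dominates the nontrivial solution $(u,v)$, which by the strong maximum principle becomes everywhere positive for $t>0$.

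For the lower barrier the argument is symmetric but requires a small-data start. Since $(u_0,v_0)\not\equiv(0,0)$ and $f>0$ on $(0,1)$, the solution $(u,v)$ is strictly positive everywhere for $t>0$; fix some $t_0>0$ and note that on any compact set $(u(t_0,\cdot),v(t_0,\cdot,\cdot))$ is bounded below by a positive constant. Now take $x$-independent data $(\underline u_0,\underline v_0)\equiv(\delta,\delta)$ with $\delta>0$ small enough that it is a subsolution of \eqref{Cauchy} — here one uses $f'(0)>0$, that $g'(0)s$ dominates... no: one uses that for small $\delta$ the reaction terms are, to leading order, linear with the "exchange + growth" operator having a positive principal eigenvalue direction, which is exactly the low-density instability; concretely, for $\delta$ small the pair $(\delta,\delta)$ (or a suitable small multiple of the principal eigenfunction of the linearised operator) satisfies the system with $\le$. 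The corresponding solution is $x$-independent, nondecreasing in $t$, bounded above (by $(U_2,V_2)$, say), hence converges to a positive $x$-independent stationary solution $(U_1,V_1(y))$. Finally, shifting time so that $\delta\le u(t_0,\cdot)$ and $\delta\le v(t_0,\cdot,\cdot)$ on the whole of $\ol\Omega$ — which holds after possibly first letting the small-data flow act for a while, or by comparing on expanding balls and using that the stationary solution is $x$-independent — the comparison principle gives $\underline u(t-t_0)\le u(t)$ and likewise for $v$, so passing to the limit yields the claimed lower bounds.

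The main obstacle is the lower bound, specifically arranging $\delta\le (u,v)$ on all of $\ol\Omega$ (not just locally) in order to start the subsolution flow: the initial datum $(u_0,v_0)$ may be compactly supported, so $(u,v)$ is small far out in $x$ for every fixed time. The way around this, and I expect this is what the paper does, is to use the generalised comparison principle (Proposition 2.3) together with the translation invariance in $x$: one builds a generalised subsolution by taking, on each translate, the small $x$-independent subsolution capped below by $0$ outside an expanding region, so that its value at a given point only has to be dominated by $(u,v)$ once the "bump" has spread there; alternatively, a cleaner route is to fix a large ball $B_R$, use the strong maximum principle to get $(u(1,\cdot),v(1,\cdot,\cdot))\ge\delta$ on $B_R$, run the subsolution flow restricted to a ball (with zero Dirichlet data, which is still a subsolution of the system and whose long-time limit $\to(U_1,V_1)$ as $R\to\infty$ by the usual KPP-on-a-ball argument), and then let $R\to\infty$. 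Everything else — monotonicity of the auxiliary flows, convergence to a stationary solution via parabolic regularity, $x$-independence, and the squeeze — is routine given Propositions 2.2–2.3 and the strong maximum principle.
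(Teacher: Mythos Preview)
Your upper-barrier construction is essentially the paper's: start from a large constant supersolution (the paper takes $K(\nu,\mu)$ with $K=\max((\|u_0\|_\infty+S)/\nu,(\|v_0\|_\infty+1)/\mu)$), flow down monotonically, and pass to the limit.

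The lower barrier has a genuine gap. The constant pair $(\delta,\delta)$ is \emph{not} in general a subsolution: the road equation requires $\mu\delta-g(\delta)\le\nu\delta$ while the flux condition requires $\nu\delta\le\mu\delta$, forcing $\nu\le\mu$ and $g(\delta)\ge(\mu-\nu)\delta\ge0$; in the pure-mortality case $g(s)=-\rho s$ this fails for every $\delta>0$. Your fallback to ``a small multiple of the principal eigenfunction of the linearised operator'' is not developed, and it is unclear which operator you mean for the coupled road--field system. You also correctly identify, but do not resolve, the obstruction that no positive $x$-independent subsolution can lie below $(u,v)$ globally at a fixed time when $(u_0,v_0)$ is compactly supported. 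The paper handles both issues at once by using a \emph{compactly supported} subsolution that lives entirely in the field, away from the road: take $R$ large so that the Dirichlet principal eigenvalue of $-\Delta$ on $B_R\subset\R^2$ is below $f'(0)/(2d)$, extend the eigenfunction $\varphi_R$ by zero, and set $\underline V(x,y):=\varepsilon\varphi_R(x,y-R-1)$. Then $(0,\underline V)$ is a generalised subsolution in the sense of Proposition~\ref{gensub} (the $u$-component is zero so the road equations are trivial; the $v$-equation holds by the eigenvalue choice and smallness of $\varepsilon$), and since $(u,v)>0$ at $t=1$ while $\underline V$ has compact support, one shrinks $\varepsilon$ so that $(0,\underline V)\le(u,v)|_{t=1}$ globally. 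Flowing up gives $(U_1,V_1)$. A final point you do not address: with compactly supported initial data the $x$-independence of $(U_1,V_1)$ is no longer automatic; the paper recovers it by noting that $(U_1,V_1)$ dominates every small $x$-translate of $(0,\underline V)$, hence (flowing) every small translate of itself, whence it is $x$-independent.
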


\begin{proof}
Let $S$ be the constant in \eqref{hyp:g}. The pair $(\ol U,\ol V)$ defined by
$$(\ol U,\ol
V)=\left[\max\left(\frac{\|u_0\|_\infty+S}{\nu},\frac{\|v_0\|_\infty+1}{\mu}
\right)\right](\nu , \mu),$$
is a supersolution of \eqref{Cauchy} which is larger than $(u,v)$ at $t=0$.
Let $(\ol u,\ol v)$ be the solution of \eqref{Cauchy} with initial datum
$(\ol U,\ol V)$. Using the comparison principle given by Proposition
\ref{comparison}, we see that $\ol u$ and $\ol v$ are nonincreasing in $t$.
Thus, $(\ol u,\ol v)$ converges as $t\to+\infty$ to a
stationary solution
$(U_2,V_2)$ of \eqref{Cauchy} satisfying
$$\limsup_{t\to+\infty}u(t,x)\leq U_2(x),\qquad 
\limsup_{t\to+\infty}v(t,x,y)\leq V_2(x,y),$$
locally uniformly in $(x,y)\in\ol\Omega$.
Furthermore, by translation invariance of the problem in the $x$-direction,
we see that $(U_2,V_2)$ does not depend on $x$.

We now construct the pair $(U_1,V_1)$. Take $R>0$ large enough
in such a way that 
the principal eigenvalue of $-\Delta$ in $B_R\subset\R^2$ with Dirichlet
boundary condition is less than $f'(0)/(2d)$. The associated principal
eigenfunction $\vp_R$ satisfies
$$-d\Delta\vp_R\leq \frac12 f'(0)\vp_R\quad\text{in }B_R.$$
Hence, for $\e>0$ small enough, the function $\e\vp_R$ satisfies 
$-d\Delta(\e\vp_R)\leq f(\e\vp_R)$ in $B_R$. We extend $\vp_R$ to $0$ outside
$B_R$ and we define $\ul V(x,y):=\e\vp_R(x,y-R-1)$. Thus, $(0,\ul V)$ is a
generalised subsolution of \eqref{Cauchy}. The strong comparison principle
given by \ref{comparison} implies that $u$ and $v$ are positive for $t>0$.
Hence, up to decreasing $\e$ if need be, we have that $(0,\ul V)$ is below
$(u,v)$ at, say, $t=1$. Let $(\ul u,\ul v)$
be the solution of \eqref{Cauchy} starting from
$(0,\underline V)$ at $t=1$. Using the comparison principle for
generalised subsolutions - Proposition \ref{gensub} - we see that
$\ul u$ and $\ul v$ are nondecreasing in $t$. If one of them were not strictly
increasing, the strong comparison principle of Proposition \ref{comparison}
would
imply that $(\ul u,\ul v)$ is constant in time, which is impossible because
$(0,\ul V)$ is not a solution of \eqref{Cauchy}. Thus, as $t\to+\infty$,
$(\ul u,\ul v)$ converges to a stationary solution
$(U_1,V_1)$ of \eqref{Cauchy} satisfying
$$0<U_1(x)\leq\liminf_{t\to+\infty}u(t,x),\qquad \ul
V(x,y)<V_1(x,y)\leq\liminf_{t\to+\infty}v(t,x,y),$$
locally uniformly in $(x,y)\in\ol\Omega$.
It remains to show that $(U_1,V_1)$ does not depend on $x$.
Since $\ul V$ is compactly supported, there exists $k>0$ such that
$(U_1,V_1)$ is above the translated
by any $h\in(-k,k)$ in the $x$-direction of $(0,\ul V)$.
By translation invariance of the problem, the solutions of \eqref{Cauchy}
emerging from these initial data coincide with the translated by $h\in(-k,k)$ of
$(\ul u,\ul v)$. We then infer, by comparison, that
$(U_1,V_1)$ is above the translated by $h\in(-k,k)$ in the $x$-direction of
itself. Namely, it does
not depend on $x$.
\end{proof}


\section{Liouville-type result for $1$-dimensional solutions}
\label{sec:Liouville}

In this section, we derive a Liouville-type result for stationary
solutions of \eqref{Cauchy} which do not depend on $x$. Namely, we will show
that the problem 
\begin{equation}\label{y-stationary}
\begin{cases}
U\equiv\text{constant, }V\equiv V(y)\\
-d V''=f(V), & y>0\\
\nu V(0)=\mu U-g(U)\\
-d V'(0)=g(U).
\end{cases}
\end{equation}
admits a unique positive, bounded solution.
This will imply the general Liouville-type result, Theorem \ref{thm:main}
part (i), because, by Lemma \ref{lem:trapped}, any positive, bounded, stationary
solution of \eqref{Cauchy} lies between two positive, bounded
solutions of \eqref{y-stationary}.

We start with considering the pure mortality case 
$g(U)=-\rho U$. The proof is much simpler in this case. Problem 
\eqref{y-stationary} reduces to
\begin{equation}\label{y-mortality}
\begin{cases}
U\equiv\frac{\nu}{\mu+\rho} V(0)\\
-d V''=f(V), & y>0\\
d V'(0)=\frac{\nu\rho}{\mu+\rho} V(0).
\end{cases}
\end{equation}
\begin{proposition}\label{pro:y}
Under the assumption \eqref{hyp:f}, problem
\eqref{y-mortality} admits a unique positive, bounded solution. 
\end{proposition}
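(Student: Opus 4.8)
The plan is to reduce the boundary value problem on the half-line to a shooting/phase-plane analysis for the scalar ODE $-dV''=f(V)$ with $V>0$ bounded on $(0,+\infty)$, using the Robin-type condition $dV'(0)=\tfrac{\nu\rho}{\mu+\rho}V(0)$ to pin down the initial point. First I would observe that any bounded positive solution of $-dV''=f(V)$ on $(0,+\infty)$ must be nonincreasing with $V'\le 0$ and $V(y)\to L$ where $L\in\{0,1\}$ is a zero of $f$; indeed if $V'(y_0)>0$ at some point then, since $f\ge 0$ on $[0,1]$ forces $V''\le 0$ wherever $V\le 1$, one shows $V$ either blows up or leaves $[0,1]$ upward and then is convex and unbounded — so $V'\le 0$ throughout, $V$ is monotone, hence has a limit, which must be a zero of $f$, i.e. $0$ or $1$; and $0\le V\le 1$. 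But the Robin condition $dV'(0)=\tfrac{\nu\rho}{\mu+\rho}V(0)\ge 0$ together with $V'(0)\le 0$ forces either $V'(0)=0$ (only if $\rho=0$, giving $V\equiv 1$) or, when $\rho>0$, it is incompatible unless we allow $V(0)$ to sit on a decreasing trajectory — so I'd rather set up the first integral directly.

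Second, and this is the real mechanism: multiply $-dV''=f(V)$ by $V'$ and integrate from $y$ to $+\infty$. Since $V$ is bounded and monotone, $V(y)\to L$ and (by a standard argument) $V'(y)\to 0$, so one gets the energy identity
\begin{equation}\label{eq:energy}
\frac{d}{2}V'(y)^2 = F(V(y))-F(L),\qquad F(s):=\int_0^s f(\sigma)\,d\sigma,
\end{equation}
where $L=\lim_{y\to\infty}V$. Since $V$ is decreasing, $V'(y)=-\sqrt{\tfrac{2}{d}\bigl(F(V(y))-F(L)\bigr)}$. Evaluating at $y=0$ and inserting the boundary condition $dV'(0)=\tfrac{\nu\rho}{\mu+\rho}V(0)$ gives, with $\beta:=\tfrac{\nu\rho}{\mu+\rho}>0$ (the case $\rho=0$ being trivial: $V\equiv 1$, $U\equiv\nu/\mu$),
\begin{equation}\label{eq:shoot}
\beta\, V(0) = -d\,V'(0) = \sqrt{2d\bigl(F(V(0))-F(L)\bigr)},\qquad\text{i.e.}\qquad \frac{\beta^2}{2d}\,V(0)^2 = F(V(0))-F(L).
\end{equation}
Here $L=0$ necessarily: a decreasing trajectory starting at $V(0)<1$ tends to $0$, and one checks $V(0)=1$ is excluded because it would force $V'(0)=0$, contradicting $\beta V(0)>0$. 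So existence and uniqueness of a bounded positive solution is equivalent to existence and uniqueness of a solution $s=V(0)\in(0,1)$ of $\Phi(s):=F(s)-\tfrac{\beta^2}{2d}s^2=0$, together with the requirement that the corresponding trajectory stay in $(0,1)$ — which holds automatically since $F$ is increasing on $(0,1)$ (as $f>0$ there) so $F(V(y))-\tfrac{\beta^2}{2d}V(y)^2$ stays... — more carefully, one needs $F(s)>\tfrac{\beta^2}{2d}s^2$ on $(0,V(0))$ to keep $V'$ real and negative, i.e. $\Phi>0$ on $(0,V(0))$.

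Third, I would analyze $\Phi(s)=F(s)-\tfrac{\beta^2}{2d}s^2$ on $[0,1]$. We have $\Phi(0)=0$, $\Phi'(0)=f(0)=0$, $\Phi''(0)=f'(0)>0$, so $\Phi>0$ just to the right of $0$; and $\Phi'(s)=f(s)-\tfrac{\beta^2}{d}s$. At $s=1$: $\Phi'(1)=f(1)-\tfrac{\beta^2}{d}=-\tfrac{\beta^2}{d}<0$, and $\Phi(1)=F(1)-\tfrac{\beta^2}{2d}$ which may have either sign. The key structural fact is that $s\mapsto f(s)/s$ need \emph{not} be monotone here (hypothesis \eqref{concave} is not assumed in Proposition~\ref{pro:y}), so $\Phi$ could a priori have several sign changes; this is the step I expect to be the main obstacle. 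I would handle it by noting $\Phi'(s)/s = f(s)/s - \beta^2/d$ and arguing that on the set where $\Phi>0$ the trajectory is well-defined, then showing the relevant root is unique by a monotonicity-in-$\beta$ or convexity argument — or, following the paper's own remark that this case is "much simpler," by using only \eqref{hyp:f}: since $f>0$ on $(0,1)$, $F$ is strictly increasing and strictly concave-then-whatever, but crucially $F(s)/s^2$... I would instead show directly that $g(s):=2dF(s)/s^2$ is strictly decreasing on $(0,1]$ — because $\bigl(F(s)/s^2\bigr)' = \bigl(s f(s)-2F(s)\bigr)/s^3$ and $\tfrac{d}{ds}\bigl(sf(s)-2F(s)\bigr)=sf'(s)-f(s)$, which is not obviously signed either, so one needs $sf(s)-2F(s)<0$, equivalently $F(s)>\tfrac12 sf(s)$; this holds at $s\to0^+$ and I would like it to propagate, but it genuinely uses concavity-type information. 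Given the paper explicitly flags that the general hypothesis \eqref{concave} is only needed for the full system, the honest version of the argument is: the equation $\beta^2 s^2/(2d)=F(s)-F(L)$ with $L=0$ has, by the intermediate value theorem applied to $\Phi$ (using $\Phi>0$ near $0$, $\Phi'(1)<0$), at least one root in $(0,1)$; uniqueness then follows because two distinct bounded positive solutions $V_1\not\equiv V_2$ would, by the strong maximum principle / sliding, be orderable, say $V_1<V_2$, but both satisfy the same Robin condition at $0$ so $dV_1'(0)-dV_2'(0)=\beta(V_1(0)-V_2(0))<0$ while $V_1-V_2<0$ with equality nowhere forces $(V_1-V_2)'(0)>0$, a contradiction. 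I would write up the uniqueness via this sliding argument and existence via the shooting identity \eqref{eq:shoot}, which together give Proposition~\ref{pro:y}; the delicate point to get exactly right is ensuring the shooting trajectory determined by \eqref{eq:shoot} does not exit $(0,1)$ before reaching its limit, which I would secure by choosing the \emph{largest} root $s^\star\in(0,1)$ of $\Phi$ so that $\Phi>0$ on $(s_0,s^\star)$ for the appropriate $s_0$, and then confirming $L=0$.
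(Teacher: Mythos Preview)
Your qualitative picture of the solution is inverted, and this error propagates through the whole argument. The Robin condition reads $dV'(0)=\tfrac{\nu\rho}{\mu+\rho}V(0)$, so for $\rho>0$ and $V(0)>0$ one has $V'(0)>0$: the solution is \emph{increasing} at the boundary, not decreasing. In fact a nonincreasing bounded positive trajectory of $-dV''=f(V)$ with values in $(0,1)$ cannot exist: on that range $V''=-f(V)/d<0$, so $V'$ is strictly decreasing, and if $V'(0)\le 0$ then $V'$ stays bounded away from $0$ from above and $V\to-\infty$. Thus your assertion that ``any bounded positive solution must be nonincreasing'' is false, your equation $\beta V(0)=-dV'(0)$ contradicts the boundary condition, and the conclusion $L=V(+\infty)=0$ is wrong. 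The correct picture (which the paper states in one line) is $0<V(0)\le 1$, $V'\ge 0$, $V$ concave and increasing to $V(+\infty)=1$ with $V'(+\infty)=0$.

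There is also a sign slip in your energy identity: integrating $-dV''V'=f(V)V'$ from $y$ to $+\infty$ gives $\tfrac{d}{2}V'(y)^2=F(L)-F(V(y))$, not $F(V(y))-F(L)$. With the correct limit $L=1$ this becomes $\tfrac{d}{2}V'(0)^2=\int_{V(0)}^1 f(s)\,ds$, and inserting the Robin condition yields the paper's equation $\theta(V(0))=0$ with
\[
\theta(\sigma)=\frac{\nu^2\rho^2}{2d(\mu+\rho)^2}\,\sigma^2-\int_\sigma^1 f(s)\,ds.
\]
Now the whole difficulty you encountered evaporates: $\theta'(\sigma)=\tfrac{\nu^2\rho^2}{d(\mu+\rho)^2}\sigma+f(\sigma)>0$ on $(0,1)$ using only \eqref{hyp:f}, since both summands are positive---no concavity of $f$, no monotonicity of $F(s)/s^2$, no sliding argument needed. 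Together with $\theta(0)<0$ and $\theta(1)\ge 0$ this gives a unique root in $(0,1]$. Your struggle in the third paragraph (the undetermined sign of $\Phi'$, the failed monotonicity of $F(s)/s^2$, the detour through a Hopf-type comparison) is entirely a symptom of working with the wrong shooting function $\Phi$ coming from the wrong limit $L=0$; with the right $\theta$ the proof is essentially three lines.
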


\begin{proof}
Let $V$ be a positive, bounded solution of \eqref{y-mortality}.
It is straightforward to check that $V$ necessarily satisfies $0<V\leq 1$,
$V(+\infty)=1$ and $V'(+\infty)=0$. 
Thus, multiplying the second equation of
\eqref{y-mortality} by $V'$ and integrating by parts between $0$ and $+\infty$
we get
$$\int_{V(0)}^1 f(s)ds=\frac d2 (V'(0))^2=\frac{\nu^2\rho^2}{2d(\mu+\rho)^2}
V^2(0).$$
Examining the function $\theta$ defined by
$$\theta(\sigma):=\frac{\nu^2\rho^2}{2d(\mu+\rho)^2}
\sigma^2-\int_\sigma^1 f(s)ds,$$
we see that 
$$\theta(0)<0,\qquad\theta(1)\geq0,\qquad\theta'>0\text{ in }(0,1).$$
There exists then a unique value $\sigma_0\in(0,1]$ such that
$\theta(\sigma_0)=0$. Hence, $V(0)=\sigma_0$. This proves the uniqueness of
positive, bounded solutions of \eqref{y-mortality}. It is also easy to verify
that the solution $V$ of \eqref{y-mortality} with initial datum $V(0)=\sigma_0$
is actually positive and bounded. Assume indeed by contradiction that this is 
not the case. One can then find
$\xi\geq0$ such that either $V(\xi)=1$ and $V'(\xi)>0$, or $V(\xi)<1$ and
$V'(\xi)=0$.
Owing to \eqref{hyp:f}, both cases are ruled out by the following equality:
$$\int_{\sigma_0}^{V(\xi)} f(s)ds=\frac d2 [(V'(0))^2-(V'(\xi))^2]=
\int_{\sigma_0}^1 f(s)ds-\frac d2(V'(\xi))^2.$$
\end{proof}
This concludes the proof of the Liouville-type result for stationary
solutions of \eqref{Cauchy-rho}.

We now pass to the case of a general reaction term on the road.

\begin{theorem}\label{thm:y}
Under the assumptions \eqref{hyp:f}-\eqref{concave}, problem
\eqref{y-stationary} admits a unique positive, bounded solution. 
\end{theorem}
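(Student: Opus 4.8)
The plan is to mimic the structure of the proof of Proposition \ref{pro:y}: reduce the problem to a scalar equation for the single number $V(0)$, show the existence and uniqueness of an admissible value, and then check that the resulting trajectory is genuinely positive and bounded. As in that proof, any positive bounded solution $V$ of the ODE $-dV''=f(V)$ on $(0,+\infty)$ satisfies $0<V\le1$, $V(+\infty)=1$, $V'(+\infty)=0$, so multiplying by $V'$ and integrating yields the first-integral relation $\int_{V(0)}^{1}f(s)\,ds=\frac{d}{2}(V'(0))^2$. The boundary conditions in \eqref{y-stationary} give $-dV'(0)=g(U)$ and $\nu V(0)=\mu U-g(U)=\mu U+dV'(0)$, so $V'(0)$ and $V(0)$ are both determined by $U$: explicitly $dV'(0)=-g(U)$ and $dV(0)=\frac{1}{\nu}(\mu U-g(U))$. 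Substituting into the first integral produces a single scalar equation
\[
\frac{1}{2d}\,g(U)^2=\int_{\frac{\mu U-g(U)}{\nu}}^{1}f(s)\,ds,
\]
and the whole matter is to prove this has exactly one solution $U>0$ for which the right-hand side makes sense (i.e. with $\frac{\mu U-g(U)}{\nu}\in(0,1]$), and then that the corresponding $V$ stays in $(0,1]$.

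The new difficulty compared with Proposition \ref{pro:y} is that $g$ is no longer linear, so the clean monotonicity argument via the function $\theta$ must be replaced by something using hypotheses \eqref{hyp:g} and \eqref{concave}. I would introduce $\Phi(U):=\frac{1}{2d}g(U)^2-\int_{\sigma(U)}^{1}f(s)\,ds$ where $\sigma(U):=\frac{\mu U-g(U)}{\nu}$, on the interval of $U$ for which $\sigma(U)\in(0,1]$. The concavity-type condition \eqref{concave} is what should make $\Phi$ strictly monotone there: since $g(U)/U$ is nonincreasing and $g(0)=0$, the map $U\mapsto\mu U-g(U)$ is nondecreasing (indeed strictly increasing once $g(U)<\mu U$, which holds for $U>0$ by KPP unless $g$ is linear of slope $\mu$ — a borderline case to treat separately), hence $\sigma$ is nondecreasing, hence $-\int_{\sigma(U)}^1 f(s)\,ds$ is nondecreasing in $U$ on the relevant range because $f\ge0$ on $(0,1)$. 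For the term $\frac1{2d}g(U)^2$ I would argue it is also nondecreasing on the range where $\sigma(U)\le1$: there $g(U)\le\mu U-\nu\sigma \le \mu U$, and combined with $g(S)\le0$ and concavity one controls the sign of $g$ and shows $g(U)^2$ does not decrease — this is the step I expect to be the main obstacle, because $g$ can change sign and $g^2$ need not be monotone in general, so one must exploit that on the admissible $U$-range $\sigma(U)\le 1$ pins $g(U)$ from above while the lower bound comes from where $g$ first vanishes. Checking the endpoint values, $\Phi<0$ at the left end (where $\sigma(U)\to0^+$, so the integral tends to $\int_0^1 f>0$ while $g(U)^2$ is small) and $\Phi\ge0$ at the right end (where $\sigma(U)=1$, making the integral vanish), yields a unique zero $U_0$.

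Having located $U_0$, it remains to verify the solution $V$ of $-dV''=f(V)$ with Cauchy data $V(0)=\sigma(U_0)\in(0,1]$, $dV'(0)=-g(U_0)$ is positive and bounded, i.e. $0<V\le1$ with $V(+\infty)=1$. This is exactly the energy argument of Proposition \ref{pro:y}: if it left the strip one would find $\xi\ge0$ with either $V(\xi)=1,\ V'(\xi)>0$ or $V(\xi)<1,\ V'(\xi)=0$, and the first-integral identity $\int_{\sigma(U_0)}^{V(\xi)}f=\frac d2[(V'(0))^2-(V'(\xi))^2]$ combined with \eqref{hyp:f} and the definition of $U_0$ rules both out. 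Finally one reads off $U=U_0$ and recovers the full stationary solution $(U,V)$, and invokes Lemma \ref{lem:trapped} to conclude Theorem \ref{thm:main}(i) for the general system \eqref{Cauchy}. One point to keep honest throughout: one must make sure the admissible interval for $U$ — the set where $0<\sigma(U)\le1$ — is a genuine interval $(0,U_{\max}]$ and nonempty, which again follows from $\sigma$ nondecreasing, $\sigma(0)=0$, and $\sigma$ eventually exceeding $1$ (since $\mu U-g(U)\to+\infty$, using $g(S)\le0$ and KPP sublinearity of $g$).
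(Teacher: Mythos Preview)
Your reduction to a scalar equation $\Phi(U)=0$ is exactly the approach the paper takes in Proposition~\ref{pro:Liouville2}, and that proposition carries an \emph{additional} hypothesis ($S_M\le(\nu+g(S_M))/\mu$, with $S_M:=\min\{s\ge0:g'\le0\text{ on }[s,\infty)\}$) precisely because the monotonicity of $\Phi$ cannot be obtained from \eqref{hyp:f}--\eqref{concave} alone. The obstacle you flag is real and you have not overcome it: $g(s)/s$ nonincreasing does not prevent $g^2$ from having an interior maximum on the admissible $U$-interval. For instance $g(s)=s(1-s/2)$ has $g(s)/s$ nonincreasing but $g^2$ increases on $(0,1)$ and decreases on $(1,2)$; with suitable $\mu,\nu$ the relevant interval straddles $U=1$ and your monotonicity argument fails. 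There is a second gap as well: you assume any positive bounded solution has $V\le1$ and therefore restrict to $\sigma(U)\in(0,1]$, but Proposition~\ref{yproperties} shows that when $S_*>\nu/\mu$ the solution satisfies $V\ge1$ and $V(0)=\sigma(U)>1$, a regime your analysis never enters (the first-integral identity still holds there, but the sign constraint on $V'(0)$ flips).

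The paper's uniqueness proof for Theorem~\ref{thm:y} avoids both issues by a Sturm-type comparison instead of a shooting/energy monotonicity. Given two positive bounded solutions $(U_1,V_1)$, $(U_2,V_2)$ with $U_1<U_2$, one multiplies the equation for $V_1''$ by $V_2$, the one for $V_2''$ by $V_1$, subtracts and integrates (either up to the first crossing point, or to $+\infty$ if $V_1<V_2$ throughout). The bulk term $\int V_1V_2\bigl(f(V_1)/V_1-f(V_2)/V_2\bigr)$ is nonnegative because $f(s)/s$ is nonincreasing, and the boundary contribution reduces to the inequality $g(U_1)U_2<g(U_2)U_1$, contradicting the monotonicity of $g(s)/s$. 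This argument uses \eqref{concave} directly and requires no control of $g^2$ whatsoever; that is why it succeeds under the bare hypotheses where your approach, and Proposition~\ref{pro:Liouville2}, do not.
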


The existence result is
contained in Lemma \ref{lem:trapped}, that we proved using a sub
and
supersolution argument.
Let us present a more explicit construction inspired by the shooting
method.

\begin{proof}[Proof of the existence part of Theorem \ref{thm:y}]
For $U\in\R$, let $V_U$ denote the associated solution of
\eqref{y-stationary}. Consider the following
set:
$$\mathcal{U}:=\{U>0\ :\ \forall y\geq0,\ V_U(y)>0\}.$$
This set is nonempty because, for $U\geq\max(\nu/\mu,S)$ ($S$ being 
the constant in \eqref{hyp:g}) the function
$V_U$
satisfies $V_U(0)\geq1$, $V_U'(0)\geq0$.
It follows that $V_U$ is nondecreasing and then positive. We then define
$$U^*:=\inf\mathcal{U}.$$
Suppose by contradiction that $U^*=0$. We can then take $U\in\mc{U}$ close
enough to $0$ in such a way that $V_U(0)<1$.
Notice that $V_U'(0)>0$, because otherwise $V_U$ would not be positive.
Call $\eta$ the first point where $V_U$ reaches the value $1$, if it exists,
else set $\eta:=+\infty$. In the second case, the function $V_U$ lies in
$(0,1)$ and then, since $dV_U''=-f(V_U)<0$, it satisfies $V_U(+\infty)=1$,
$V_U'(+\infty)=0$. 
Hence, in both cases, $V_U'(\eta)\geq0$.
We derive
$$V_U'(0)\geq\frac1d\int_0^\eta f(V_U(y))dy
\geq\frac1d\int_0^\eta\frac{V_U'(y)}{V_U'(0)}f(V_U(y))dy=
\frac{\int_{V_U(0)}^1 f(s)ds}{dV_U'(0)}.$$
Choosing $U\in\mc{U}$ small enough then leads to a contradiction, because
$V_U(0),V_U'(0)\to0$ as $U\to0$.
%
Therefore, $U^*>0$.

We claim that $V_{U^*}$ is positive and bounded.
The continuity of solutions of the Cauchy problem with respect to the initial
datum yields $V_{U^*}\geq0$.
Assume by way of contradiction that there exists $y_0>0$ such that
$V_{U^*}(y_0)=0$. We necessarily have that $V_{U^*}'(y_0)=0$, because otherwise 
$V_{U^*}$ would be negative somewhere. Hence, $V_{U^*}\equiv0$ by the uniqueness
of solutions of the Cauchy problem. But then $U^*=0$, which is impossible. If
$V_{U^*}(0)=0$ then 
$$V_{U^*}'(0)=-g(U^*)/d=-\mu U^*/d<0,$$
which is again a contradiction. We have shown that $V_{U^*}>0$ on $[0,+\infty)$.

Assume now by contradiction that $V_{U^*}$ is not bounded from above.
There exists then $y_0\geq0$ such that
$$V_{U^*}(y_0)>1,\qquad V_{U^*}'(y_0)>0.$$
Since $V_U\to V_{U^*}$ as $U\to U^*$ in $C^1_{loc}([0,+\infty))$, for $U$
close enough to $U^*$ we have that
$$\min_{[0,y_0]}V_U>\frac12\min_{[0,y_0]}V_{U^*}>0,\qquad V_U(y_0)>1,
\qquad V_U'(y_0)>0.$$
It follows that $V_U$ is increasing in $[y_0,+\infty)$ and then $\min V_U>0$.
This means that $U\in\mathcal{U}$, which contradicts the definition of $U^*$.
Therefore, $(U^*,V_{U^*})$ is a positive, bounded solution of
\eqref{y-stationary}.
%
\end{proof}

\begin{proof}[Proof of the uniqueness part of Theorem \ref{thm:y}]
Assume by way of contradiction that \eqref{y-stationary} admits two distinct
positive, bounded solutions 
$(U_1,V_1)$ and $(U_2,V_2)$. The uniqueness of solutions of the Cauchy problem
yields
$U_1\neq U_2$; say, $U_1<U_2$.
Since \eqref{concave} implies that the function $G:\R_+\to\R$ defined by
$G(s):=\mu-g(s)/s$ is nondecreasing, we obtain
$$\nu V_1(0)=G(U_1)U_1\leq G(U_2)U_1\leq\nu V_2(0),$$
and equality holds if and only if $V_2(0)=0$. But if $V_2(0)=0$ then
$-dV_2'(0)=\mu U_2>0$ and then $V_2$ would not be nonnegative. Thus,
$V_1(0)<V_2(0)$.
We argue differently depending on the fact that $V_1=V_2$ somewhere or not.

Case 1) $V_1(y)=V_2(y)$ for some $y>0$.\\
Let $\eta$ be the smallest $y>0$ such that $V_1(y)=V_2(y)$. Namely, $V_1<V_2$ in
$(0,\eta)$ and $V_1(\eta)=V_2(\eta)$. By the uniqueness of solutions of the
Cauchy problem we infer that $V_1'(\eta)>V_2'(\eta)$.
Multiplying by $V_2$ the equation satisfied by $V_1''$ and by $V_1$ the one
satisfied by $V_2''$ and integrating over $(0,\eta)$, we get
\[\begin{split}
\frac1d\int_0^\eta V_1V_2\left(
\frac{f(V_1)}{V_1}-\frac{f(V_2)}{V_2}\right) &=
\int_0^\eta (V_2''V_1-V_1''V_2)=
\big[V_2'V_1-V_1'V_2\big]_0^\eta\\
&<V_1'(0)V_2(0)-V_2'(0)V_1(0).
\end{split}\]
The first integral above is nonnegative by \eqref{concave}. Thus,
\begin{equation}\label{V12(0)}
V_1'(0)V_2(0)>V_2'(0)V_1(0).
\end{equation}
This inequality reads
$$g(U_1)(\mu U_2-g(U_2))<g(U_2)(\mu U_1-g(U_1)),$$
that is,
$$g(U_1)U_2<g(U_2)U_1.$$
This contradicts \eqref{concave} because $U_1<U_2$.

Case 2) $V_1(y)<V_2(y)$ for all $y>0$.\\
Notice that $V_1$ and $V_2$ cannot attain
the value $1$ with a nonzero first derivative, 
because they would be either unbounded or negative somewhere. It follows
from the uniqueness of solutions of the Cauchy problem
that they are either identically
equal to $1$, or below $1$, concave and increasing or above $1$,  and
decreasing. In any case they satisfy
$$\forall i\in\{1,2\},\quad
\lim_{y\to+\infty}V_i(y)=1,\quad\lim_{y\to+\infty}V_i'(y)=0.$$
Thus, the same computation as in the case 1 yields
\[\begin{split}
\frac1d\lim_{\eta\to+\infty}\int_0^\eta V_1V_2\left(
\frac{f(V_1)}{V_1}-\frac{f(V_2)}{V_2}\right) &=
\lim_{\eta\to+\infty}\int_0^\eta (V_2''V_1-V_1''V_2)\\
&=V_1'(0)V_2(0)-V_2'(0)V_1(0).
\end{split}\]
We know from \eqref{concave} that the function $s\mapsto f(s)/s$ is
nonincreasing. 
Moreover, since it is positive in $(0,1)$ and negative in $(1,+\infty)$, 
it cannot be constant in a left or in a right neighborhood of $1$.
It follows that $f(V_1)/V_1\geq f(V_2)/V_2$ on $[0,+\infty)$,
with strict inequality somewhere.
As a consequence, the first integral above is strictly positive. That is,
\eqref{V12(0)} holds. We then get a contradiction by arguing as in the previous
case.
\end{proof}

%

We now derive some properties of positive, bounded solutions of
\eqref{y-stationary}.

\begin{proposition}\label{yproperties}
Assume that \eqref{hyp:f}, \eqref{hyp:g} hold and that $s\mapsto g(s)/s$
is nonincreasing. 
Let $S_*:=\inf\{S>0\ :\ g(S)\leq0\}$. Then, any positive, bounded
solution $(U,V)$ of \eqref{y-stationary} satisfies
$$S_*\leq\frac\nu\mu\solose\begin{cases}
                    S_*\leq U\leq\frac\nu\mu\\
                    V\leq1,
                   \end{cases}
\qquad
S_*\geq\frac\nu\mu\solose\begin{cases}
                    \frac\nu\mu\leq U\leq S_*\\
                    V\geq1.
                   \end{cases}
$$
\end{proposition}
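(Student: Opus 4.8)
The plan is to exploit the algebraic relations in \eqref{y-stationary} at $y=0$ together with the ODE structure of $V$. First I would rewrite the boundary conditions: combining $\nu V(0)=\mu U-g(U)$ and $-dV'(0)=g(U)$, and using that $g(s)/s$ is nonincreasing so that $G(s):=\mu-g(s)/s$ is nondecreasing with $G(S_*)=\mu-g(S_*)/S_*$. The key observation is that $g(U)\lessgtr0$ is equivalent to $U\gtrless S_*$ (using the definition of $S_*$ and the monotonicity of $g(s)/s$, which forces $g$ to have a single sign change at $S_*$), and that the sign of $g(U)$ determines the sign of $V'(0)$ through $-dV'(0)=g(U)$. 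So the first step is to split according to the sign of $g(U)$, i.e.\ according to whether $U\le S_*$ or $U\ge S_*$.

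Next I would pin down $U$ relative to $\nu/\mu$. From $\nu V(0)=G(U)\,U$ and the fact (already used in the uniqueness proof) that $V(0)>0$ for any positive bounded solution, one gets $G(U)>0$. If $g(U)\ge0$ then $V'(0)\le0$; since $V>0$ and bounded and $-dV''=f(V)$, a phase-plane/energy argument (exactly the one in the proof of Proposition \ref{pro:y} and in Case 2 of Theorem \ref{thm:y}) shows $V$ cannot start nonincreasing unless it is constant $\equiv1$, hence in fact $V'(0)=0$, $g(U)=0$, $U=S_*$, $V\equiv1$, and then $\nu=\mu U=\mu S_*$ forces $S_*=\nu/\mu$, consistent with both branches. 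If instead $g(U)<0$, then $V'(0)>0$; again by the energy identity $\frac d2\big((V'(0))^2-(V'(\xi))^2\big)=\int_{V(0)}^{V(\xi)}f(s)\,ds$ and \eqref{hyp:f}, the only way to keep $V$ positive and bounded is $0<V(0)<1$, $V$ increasing and concave, $V(+\infty)=1$, $V'(+\infty)=0$; thus $V\le1$. Then $\nu V(0)<\nu$ gives $G(U)U<\nu$; comparing with $G(U)U=\nu V(0)$ and using $V(0)<1$ one extracts $U<\nu/\mu$ when $g(U)<0$ is accompanied by $\mu U - g(U) = \nu V(0) < \nu$, so $\mu U<\nu+g(U)<\nu$, i.e.\ $U<\nu/\mu$. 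Together with $g(U)<0\Leftrightarrow U<S_*$ this is the first (``$S_*\le\nu/\mu$'') alternative: $S_*\le U$? — here I must be careful, and this is where the hypothesis $S_*\le\nu/\mu$ versus $S_*\ge\nu/\mu$ actually gets used to decide which of $g(U)\ge0$ or $g(U)<0$ can occur.

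Concretely, in the case $S_*\le\nu/\mu$: suppose $U<S_*$, so $g(U)<0$ and by the above $V\le1$ and $\mu U<\nu$; but I also need the reverse bound $U\ge S_*$, so I would instead argue by contradiction using the boundary relation more tightly, showing $U<S_*$ is incompatible with $V(0)\le 1$ via $\nu V(0)=\mu U-g(U)$ and $S_* \le \nu/\mu$ — if $U<S_*\le\nu/\mu$ then $g(U)<0$ so $\nu V(0)=\mu U-g(U)$; one then checks $V(0)$ would be forced below a value contradicting the energy balance at $V(+\infty)=1$. Symmetrically in the case $S_*\ge\nu/\mu$: if $U>S_*$ then $g(U)>0$, $V'(0)<0$, the energy argument forces $V\equiv1$ hence $U=\nu/\mu\le S_*$, contradiction; so $U\le S_*$, and if $U<\nu/\mu\le S_*$ then again $g(U)<0$ gives $V\le1<$ the value needed, a contradiction; hence $\nu/\mu\le U\le S_*$ and $V\ge1$ (now $V$ is $\equiv1$ or above $1$ and decreasing).

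The main obstacle I expect is not any single estimate but the bookkeeping: correctly coupling ``sign of $g(U)$ $\Leftrightarrow$ position of $U$ relative to $S_*$'' with ``sign of $V'(0)$ $\Leftrightarrow$ whether $V$ lies below, at, or above $1$'' and then feeding the sign of $V(0)-1$ back through $\nu V(0)=\mu U-g(U)$ to locate $U$ relative to $\nu/\mu$ — all while the global hypothesis ($S_*\le\nu/\mu$ or $S_*\ge\nu/\mu$) is what breaks the symmetry and rules out the spurious branch. Once the case split is set up cleanly, each sub-case is closed by the same energy identity $\int_{V(0)}^{1}f(s)\,ds=\frac d2(V'(0))^2$ (valid since $V(+\infty)=1$, $V'(+\infty)=0$) combined with \eqref{hyp:f} and \eqref{concave}, exactly as in Propositions \ref{pro:y} and Theorem \ref{thm:y}.
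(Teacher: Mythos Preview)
Your plan uses the right ingredients---the sign of $g(U)$ fixes the sign of $V'(0)$ via $-dV'(0)=g(U)$, the ODE then determines whether $V$ is $\equiv 1$, below $1$ and increasing, or above $1$ and decreasing, and the relation $\nu V(0)=\mu U-g(U)$ places $U$ relative to $\nu/\mu$---but two concrete errors would derail the argument as written. First, the assertion that ``$V$ cannot start nonincreasing unless it is constant $\equiv1$'' is false: when $g(U)>0$ one has $V'(0)<0$, and the bounded positive trajectory is precisely $V(0)>1$ with $V$ strictly decreasing to $1$ (this is one of the three alternatives you yourself cite from Case~2 of Theorem~\ref{thm:y}). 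So $g(U)>0$ does not collapse to $g(U)=0$; it forces $V>1$. Second, in your third paragraph you write ``suppose $U<S_*$, so $g(U)<0$'', but $U<S_*$ gives $g(U)>0$ by the definition of $S_*$ (you had the direction right in the first paragraph). With the sign corrected the contradiction does work: $U<S_*\Rightarrow g(U)>0\Rightarrow V'(0)<0\Rightarrow V(0)>1\Rightarrow \mu U>\nu+g(U)>\nu\Rightarrow U>\nu/\mu\geq S_*$.

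The paper's proof is much shorter and bypasses the energy identity entirely. Since $-dV''=f(V)$, $V$ cannot touch $1$ without being identically $1$; hence exactly one of $V\equiv1$, $V<1$, $V>1$ holds globally. Each alternative is equivalent, through the chain $V(0)\lessgtr1\Leftrightarrow V'(0)\gtrless0\Leftrightarrow g(U)\lessgtr0$, to a sign condition on $g(U)$, which in turn yields respectively $S_*\leq U=\nu/\mu$, $S_*<U<\nu/\mu$, or $\nu/\mu<U<S_*$. The hypothesis $S_*\leq\nu/\mu$ (resp.\ $S_*\geq\nu/\mu$) then rules out the third (resp.\ second) branch, and nothing more is needed. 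Your plan would eventually reconstruct this trichotomy, but through contradiction arguments and an energy computation that are not required here.
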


\begin{proof}
Since $-dV''=f(V)$, $V$ cannot attain the value $1$ without being constant.
The following implications are then easily obtained:
$$V\equiv1\text{ on }[0,+\infty)\sse V(0)=1\sse V'(0)=0\sse g(U)=0\solose
S_*\leq U=\frac\nu\mu,$$
$$V<1\text{ on }[0,+\infty)\sse V(0)<1\sse V'(0)>0\sse g(U)<0\solose
S_*<U<\frac\nu\mu,$$
$$V>1\text{ on }[0,+\infty)\sse V(0)>1\sse V'(0)<0\sse g(U)>0\solose
\frac\nu\mu<U<S_*.$$
The result follows.
\end{proof}

It is not hard to construct examples
with $s\mapsto f(s)/s$ not nonincreasing for which the Liouville-type result
fails.
However, as shown by Proposition \ref{pro:y}, this hypothesis can be dropped
if $g$ satisfies some suitable conditions. The weaker sufficient condition we
are able to obtain is expressed in terms of the following quantity:
$$S_M:=\min\{s\geq0\ :\ g'\leq0\text{ in }[s,+\infty)\}.$$
Since $S_M=0$ when $g(s)=-\rho s$, the following result generalizes Proposition
\ref{pro:y}.
\begin{proposition}\label{pro:Liouville2}
Assume that \eqref{hyp:f}, \eqref{hyp:g} hold, that $s\mapsto g(s)/s$
is nonincreasing and that 
$$S_M\leq\frac{\nu+g(S_M)}\mu.$$
Then, problem \eqref{y-stationary} admits a unique positive, bounded solution. 
\end{proposition}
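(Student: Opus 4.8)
The plan is to prove uniqueness by adapting the proof of Theorem~\ref{thm:y}, replacing the hypothesis that $s\mapsto f(s)/s$ is nonincreasing by the assumption $S_M\le(\nu+g(S_M))/\mu$ together with monotonicity of $s\mapsto g(s)/s$. First I would set up by contradiction: suppose $(U_1,V_1)$ and $(U_2,V_2)$ are two distinct positive, bounded solutions of \eqref{y-stationary}; as in the earlier proof, uniqueness for the Cauchy problem for the ODE $-dV''=f(V)$ forces $U_1\ne U_2$, say $U_1<U_2$, and then the monotonicity of $G(s)=\mu-g(s)/s$ gives $V_1(0)\le V_2(0)$, with equality ruled out (it would force $V_2(0)=0$ hence $V_2$ negative somewhere), so $V_1(0)<V_2(0)$. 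The key quantitative step, which was the role of $f(V_1)/V_1\ge f(V_2)/V_2$ in Theorem~\ref{thm:y}, must now come from controlling where the solutions sit relative to $1$: I would use the hypothesis $S_M\le(\nu+g(S_M))/\mu$ to locate $U_1,U_2$ and the corresponding boundary values $V_i(0)$, and to pin down the qualitative shape of $V_i$ (monotone, converging to $1$, staying on one side of $1$) via the same elementary phase-plane analysis as in Proposition~\ref{yproperties}.

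Next I would run the two-case analysis exactly as in the uniqueness proof of Theorem~\ref{thm:y}. In \textbf{Case 1}, where $V_1(\eta)=V_2(\eta)$ at a first crossing point $\eta>0$, I multiply the equation for $V_1''$ by $V_2$ and vice versa, integrate over $(0,\eta)$, and obtain
\[
\frac1d\int_0^\eta V_1V_2\left(\frac{f(V_1)}{V_1}-\frac{f(V_2)}{V_2}\right)
= \big[V_2'V_1-V_1'V_2\big]_0^\eta < V_1'(0)V_2(0)-V_2'(0)V_1(0).
\]
In \textbf{Case 2}, where $V_1<V_2$ on all of $(0,+\infty)$, the qualitative description of the $V_i$ (forced by $-dV''=f(V)$ plus boundedness and positivity) gives $V_i(+\infty)=1$, $V_i'(+\infty)=0$, so the boundary term at $\eta\to+\infty$ vanishes and the same integral identity holds with the sharp inequality $V_1'(0)V_2(0)>V_2'(0)V_1(0)$, i.e. \eqref{V12(0)}. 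In both cases the left-hand integral must be shown nonnegative — and this is where the new hypothesis must do the work that $f$-concavity did before.

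The main obstacle, then, is exactly establishing that $\int_0^\eta V_1V_2(f(V_1)/V_1-f(V_2)/V_2)\ge0$ (resp. $>0$) \emph{without} assuming $s\mapsto f(s)/s$ nonincreasing. The idea is that the condition $S_M\le(\nu+g(S_M))/\mu$, combined with $g(s)/s$ nonincreasing, forces both $U_i$ to lie in a regime where $g(U_i)\le0$, hence $V_i'(0)\ge0$ and $V_i(0)\le1$; by the phase-plane analysis each $V_i$ is then nondecreasing, concave (where $<1$), and stays in $(0,1]$ throughout. On the whole range $[0,1]$ the function $s\mapsto f(s)/s$ need not be monotone, but since $V_1\le V_2$ pointwise and both take values in an interval where we do control the sign of $f$, one has to argue directly: on $(0,1)$, $f>0$, and I would leverage the fact that in this regime the relevant comparison can be carried out by instead working with the translated/rescaled comparison function or by a sliding argument à la \cite{BL80}, exploiting that $(0,\underline V)$-type subsolutions of the associated parabolic problem are available. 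Concretely, I expect the clean route is: reduce to showing that the \emph{smaller} solution $V_1$ cannot exist separately from $V_2$ by using $V_2$ (suitably) as a supersolution and $V_1$ as a subsolution on $[0,+\infty)$ and invoking the strong maximum principle / uniqueness of the minimal solution, the hypothesis $S_M\le(\nu+g(S_M))/\mu$ being precisely what guarantees the boundary condition at $y=0$ is compatible with this ordering. Once the integral sign is secured, \eqref{V12(0)} translates (via $-dV_i'(0)=g(U_i)$ and $\nu V_i(0)=\mu U_i-g(U_i)$) into $g(U_1)U_2<g(U_2)U_1$, contradicting the monotonicity of $s\mapsto g(s)/s$ since $U_1<U_2$. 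This completes the proof.
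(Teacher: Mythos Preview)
Your proposal has a genuine gap, and it stems from choosing the wrong template to adapt. You try to rerun the proof of Theorem~\ref{thm:y}, whose engine is the cross-multiplied identity
\[
\frac1d\int_0^\eta V_1V_2\left(\frac{f(V_1)}{V_1}-\frac{f(V_2)}{V_2}\right)
=\big[V_2'V_1-V_1'V_2\big]_0^\eta,
\]
and you correctly identify that the sign of the left-hand side is the crux. But you never secure that sign. Your claim that the hypothesis $S_M\le(\nu+g(S_M))/\mu$ forces $g(U_i)\le0$ and hence $V_i\le1$ is unjustified: the paper's own analysis (see Proposition~\ref{yproperties} and the case split in the proof) shows that under this hypothesis one may perfectly well have $S_*>\nu/\mu$, i.e.\ $V_i\ge1$. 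And even granting $V_i\le1$, on $(0,1)$ the map $s\mapsto f(s)/s$ need not be monotone, so the integrand can change sign. The fallback suggestions you offer (sliding, sub/supersolutions, strong maximum principle) are not fleshed out and do not obviously give the strict inequality \eqref{V12(0)} you need.

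The paper's route is entirely different and much closer to Proposition~\ref{pro:y} than to Theorem~\ref{thm:y}: multiply $-dV''=f(V)$ by $V'$ (not by the other solution) and integrate from $0$ to $+\infty$ to get the energy identity $g(U)^2+2d\int_1^{G(U)}f=0$, where $G(\sigma)=(\mu\sigma-g(\sigma))/\nu$. This reduces uniqueness to showing that the scalar function $\theta(\sigma)=g(\sigma)^2+2d\int_1^{G(\sigma)}f$ has a unique zero in the admissible interval $[\min(S_1,S_*),\max(S_1,S_*)]$ (with $G(S_1)=1$), which follows once $\theta$ is strictly monotone there. The hypothesis $S_M\le(\nu+g(S_M))/\mu$ is exactly $G(S_M)\le1$, hence $S_M\le S_1$; this is what makes $g^2$ monotone on $[S_1,S_*]$ in the delicate case $S_*>S_1$. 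No assumption on $f(s)/s$ is needed because $f$ enters only through $\int_1^{G(\sigma)}f$, whose monotonicity comes from the sign of $f$ and the monotonicity of $G$.
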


\begin{proof}
Let $(U,V)$ be a positive, bounded solution of \eqref{y-stationary}. We know
that $V(+\infty)=1$ and $V'(+\infty)=0$.
The same integration by parts as in the proof of Proposition \ref{pro:y}
yields $\theta(U)=0$, with
$$\theta(\sigma):=g^2(\sigma)+2d\int_1^{G(\sigma)}f(s)ds,
\qquad G(\sigma):=(\mu \sigma-g(\sigma))/\nu.$$
Thus, $U$ satisfies
\begin{equation}\label{sol}
\begin{cases}
(G(U)-1)g(U)\geq0\\
\theta(U)=0.
  \end{cases}
\end{equation}
It is not hard to show that, conversely, if $U>0$ satisfies \eqref{sol} then the
associated solution of \eqref{y-stationary} is positive and bounded.
The function $G$ satisfies $G(0)=0$,
$G(+\infty)=+\infty$ and it is strictly increasing in
the set where it is positive. Let $S_1>0$ be such that $G(S_1)=1$.
Set $S_*:=\inf\{S>0\ :\ g(S)\leq0\}$ and then call $a:=\min(S_1,S_*)$,
$b:=\max(S_1,S_*)$. 
For $U>b$, $G(U)>1$ and $g(U)\leq0$. Hence $U>b$ satisfies the first condition
in \eqref{sol} only if $g(U)=0$, but then $\theta(U)<0$ because $f<0$ in
$(1,+\infty)$. If $0<U<a$ then $(G(U)-1)g(U)<0$. This shows that \eqref{sol}
has no solution outside $[a,b]$.
Since $\theta(S_*)\leq0$ and $\theta(S_1)\geq0$,
the function $\theta$ vanishes somewhere on $[a,b]$. Moreover, $(G-1)g\geq0$
there. Therefore, \eqref{sol} admits solution on $[a,b]$. We conclude the proof
by showing that $\theta$ is strictly monotone on $[a,b]$. If $S_*<S_1$ then
$g^2$ is nondecreasing and $G$ is strictly increasing and smaller than $1$ on
$[a,b]$, whence $\theta$ is strictly increasing. Consider now the case
$S_*>S_1$. 
The function $G$ is strictly increasing and larger than $1$ on $[S_1,S_*]$.
On the other hand, by hypothesis,
$$G(S_M)=\frac{\mu S_M-g(S_M)}\nu\leq1.$$
Whence, $S_M\leq S_1$ and thus $g^2$ is decreasing on $[S_1,S_*]$.
We eventually infer that $\theta$ is strictly decreasing on $[S_1,S_*]$.
\end{proof}

\begin{remark}\label{rem:noLiouville}
Under the assumptions of Proposition \ref{yproperties}, if \eqref{y-stationary}
admits multiple positive, bounded solutions, then there exists a minimal one
$(\ul U,\ul V)$ among them. Moreover, $(\ul U,\ul V)$ satisfies $\ul
U\geq\nu/\mu$, $\ul V\geq1$ and it attracts solutions of \eqref{Cauchy}
starting below $(\nu/\mu,1)$. Indeed, if non-uniqueness occurs, then Proposition
\ref{pro:Liouville2} yields
$$S_*\geq S_M>\frac{\nu+g(S_M)}\mu\geq\frac\nu\mu.$$
Whence any positive, bounded
solution $(U,V)$ of \eqref{y-stationary} satisfies $U\geq\nu/\mu$ and $V\geq1$
due to Proposition \ref{yproperties}. Therefore, by comparison, if
$(u,v)$ is a solution of \eqref{Cauchy} with an initial datum
$(u_0,v_0)$ below $(\nu/\mu,1)$, then
$$\limsup_{t\to+\infty}u(t,x)\leq
U,\quad\limsup_{t\to+\infty}v(t,x,y)\leq V(y),$$
locally uniformly in $(x,y)\in\ol\Omega$.
On the other hand, if $(u_0,v_0)\not\equiv(0,0)$, Lemma \ref{lem:trapped}
implies that equality holds in the above expressions for one of these solutions
$(U,V)$. This is the minimal solution.
\end{remark}


\section{Propagation}
\label{sec:velocity}
In this section we prove statements (ii) and (iii) of Theorem \ref{thm:main}. We
concentrate on
propagation to the right, since propagation to the left is obtained by
replacing
$q$ with $-q$. 
The general plan of the proof is that of \cite{BRR2}. First, we look for plane
waves (or
exponential solutions) for the linearised system around $(0,0)$. These have a
critical velocity $w_*$. It then follows from the KPP assumption that solutions
of the nonlinear system spread at most with velocity $w_*$. To prove that they
spread at least with velocity $w_*$, we look at the problem in a wide strip
$\R\times(0,L)$ and we construct compactly supported sub-solutions using again
exponential solutions, but this time with complex exponents.
Finally, we establish the condition under which the critical wave velocity
is higher than the KPP speed. We do not repeat here all the arguments of
\cite{BRR2}, but we give details only of the points that are different.


\subsection{Exponential solutions}\label{sec:exp}
Exponential solutions of the linearised system are looked under
the form
\begin{equation}
\label{exp}
(u(t,x),v(t,x,y))=e^{-\alpha(x-ct)}(1,\gamma e^{-\beta y}),
\end{equation}
with $c\geq0$, $\gamma>0$ and $\alpha,\beta\in\R$ (not
necessarily positive). Namely, we look for $c$, $\alpha$, $\beta$ and $\gamma$
satisfying
$$
\begin{cases}
c\alpha-D\alpha^2-q\alpha=\nu\gamma-\mu+g'(0)\\
c\alpha-d(\alpha^2+\beta^2)=f'(0)\\
d\gamma\beta=\mu-\nu\gamma.
\end{cases}
$$
The third equation yields $\gamma=\mu/(\nu+d\beta)$, which is positive iff
$\beta>-\nu/d$. This will be assumed without further reference.
Substituting we get
\begin{equation}\label{2exp}
\begin{cases}
-D\alpha^2+(c-q)\alpha=-\di\frac{d\mu\beta}{\nu+d\beta}+g'(0)\\
c\alpha-d(\alpha^2+\beta^2)=f'(0).
\end{cases}
\end{equation}
The first equation of \eqref{2exp} in the
unknown $\alpha$ has the roots
$$\alpha_D^\pm(c,\beta)=\frac1{2D}\left(c-q\pm\sqrt{
(c-q)^2+4D(\chi(\beta)-g'(0))}\right).$$
where the function $\chi$ is defined on $(-\nu/d,+\infty)$ by
$$\chi(\beta):=\frac{d\mu \beta}{\nu+d\beta}.$$
Since $\chi$ is strictly increasing and tends to $\mu$ at $+\infty$, we see
that
$\alpha_D^\pm(c,\beta)$ are
real iff 
\begin{equation}\label{ccond}
(c-q)^2>4D(g'(0)-\mu), 
\end{equation}
$$\beta\geq\underline\beta(c):=\chi^{-1}(g'(0)-(c-q)^2/4D).$$
Therefore, if $c$ satisfies \eqref{ccond}, the set of real solutions
$(\beta,\alpha)$ of the first equation of \eqref{2exp} is given
by $\Sigma(c)=\Sigma^-(c)\cup\Sigma^+(c)$, with
$$\Sigma^\pm(c):=\{(\beta,\alpha_D^\pm(c,\beta))\ :\ \beta\geq\ul\beta(c)\}.$$
This is a smooth curve with leftmost
point $(\ul\beta(c),(c-q)/2D)$. The second equation in \eqref{2exp} admits
real solutions iff $c\geq c_K$, where $c_K:=2\sqrt{df'(0)}$ is the
classical KPP critical speed for the second equation in \eqref{Cauchy}.
In the $(\beta,\alpha)$ plane, it represents the circle $\Gamma(c)$ of centre
$(0,c/2d)$ and radius
$$r(c)=\frac{\sqrt{c^2-c_K^2}}{2d}.$$
Let $\mc{S}(c)$ denote the closed set bounded from below by $\Sigma^-(c)$ and
from above by
$\Sigma^+(c)$ and let $\mc{G}(c)$ denote the closed disc
with boundary $\Gamma(c)$.
Exponential functions of the type \eqref{exp} are supersolutions of the
linearisation of \eqref{Cauchy} iff $(\beta,\alpha)\in \mc{S}(c)\cap
\mc{G}(c)$. It is easy to check that, for $c_K\leq c<c'$,
$\mc{G}(c)\subset\mc{G}(c')$.
Moreover, as $c\to+\infty$, the radius $r(c)$ of the disc $\mc{G}(c)$ diverges
and its bottom $(0,(c-\sqrt{c^2-c_K^2})/2d)$ tends to $(0,0)$.
Consequently
$$\bigcup_{c\geq c_K}\mc{G}(c)=\R\times(0,+\infty).$$
On the other hand,
the sets $\mc{S}(c)$ are not increasing with respect to $c$, but
the sets $\mc{S}(c)\cap(\R\times\R_+)$ are. Indeed, under condition
\eqref{ccond},
$\alpha_D^+(c,\beta)\geq0$ iff $c\geq q$ or $c\leq q$ and
$\chi(\beta)\geq g'(0)$, and in both cases
$$2D\partial_c\alpha_D^+(c,\beta)=1+\frac{c-q}{\sqrt{
(c-q)^2+4D(\chi(\beta)-g'(0))}}\geq0,$$
whereas $\alpha_D^-(c,\beta)\geq0$ iff $c\geq q$ and
$\chi(\beta)\leq g'(0)$, which yields
$$2D\partial_c\alpha_D^-(c,\beta)=1-\frac{c-q}{\sqrt{
(c-q)^2+4D(\chi(\beta)-g'(0))}}\leq0.$$
It follows that there exists $w_*\geq c_K$ such that
$\mc{S}(c)\cap\mc{G}(c)\neq\emptyset$ iff $c\geq
w_*$. Moreover, $w_*=c_K$ iff $(0,c_K/2d)\in\mc{S}(c_K)$.
Otherwise $\mc{S}(w_*)\cap\mc{G}(w_*)$ reduces to a point in $(0,+\infty)^2$,
denoted by $(\beta_*,\alpha_*)$. These notations will be kept in the following
sections.

 \begin{figure}[ht]
 \centering
 \subfigure[$c<w_*$]
   {\includegraphics[width=4.5cm]{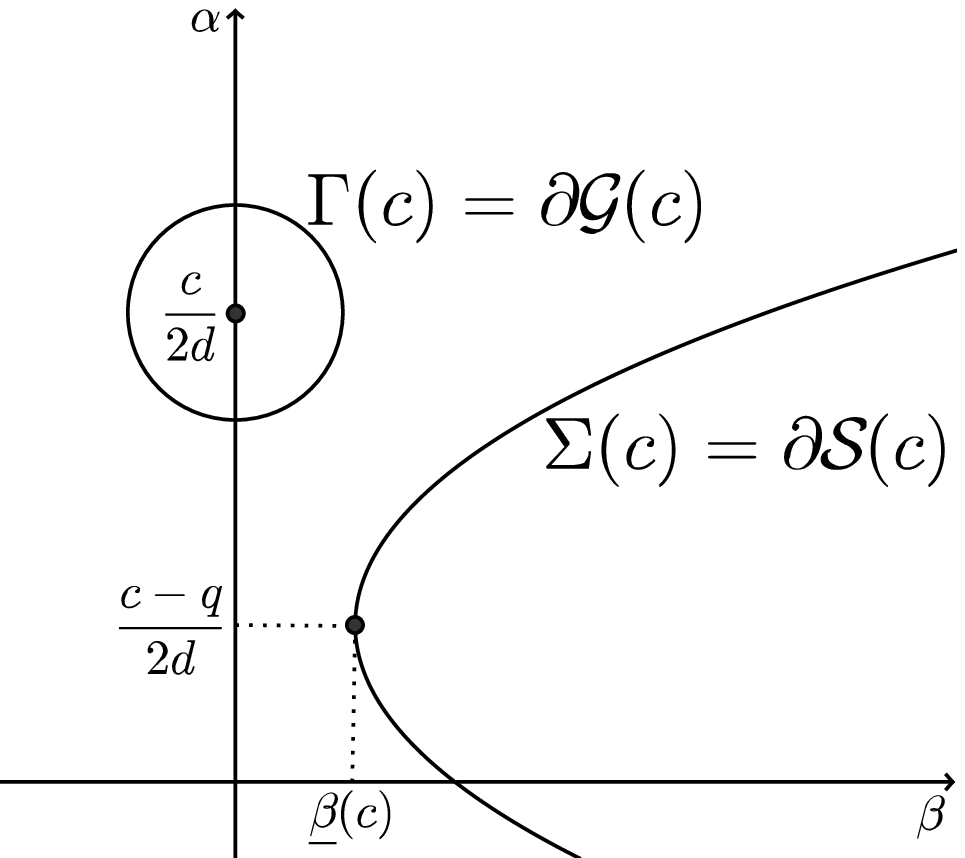}}
 \hspace{3mm}
 \subfigure[$c=w_*$]
   {\includegraphics[width=4.5cm]{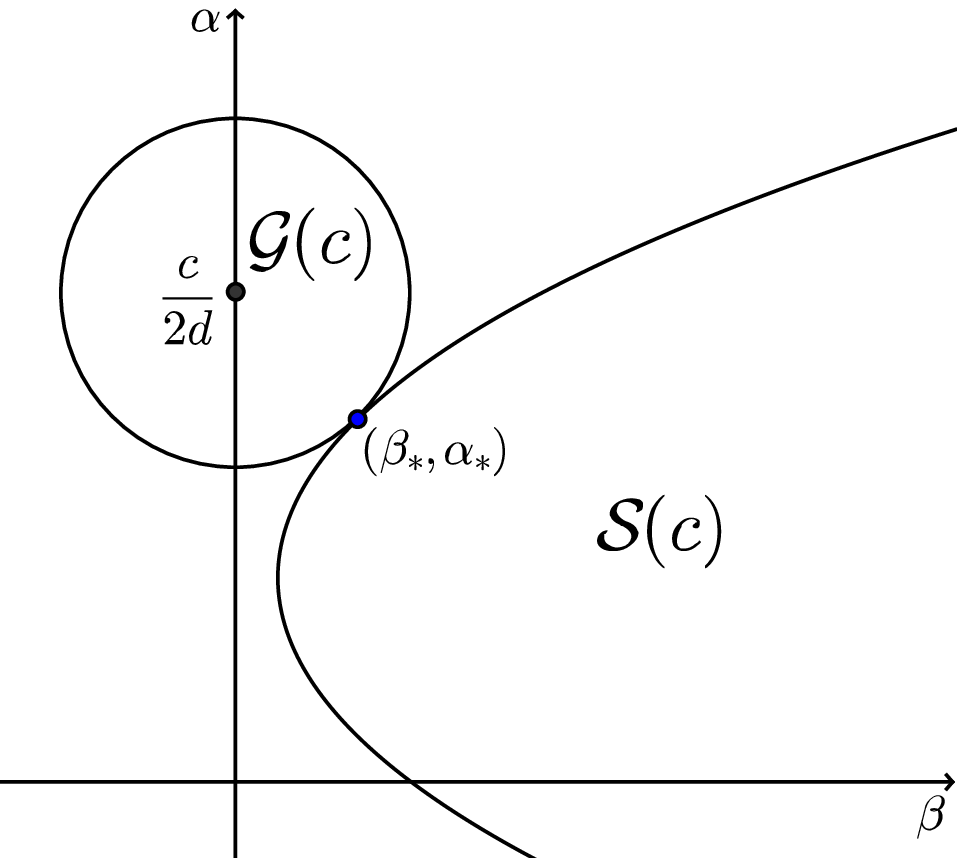}}
 \hspace{3mm}
 \subfigure[$c>w_*$]
   {\includegraphics[width=4.5cm]{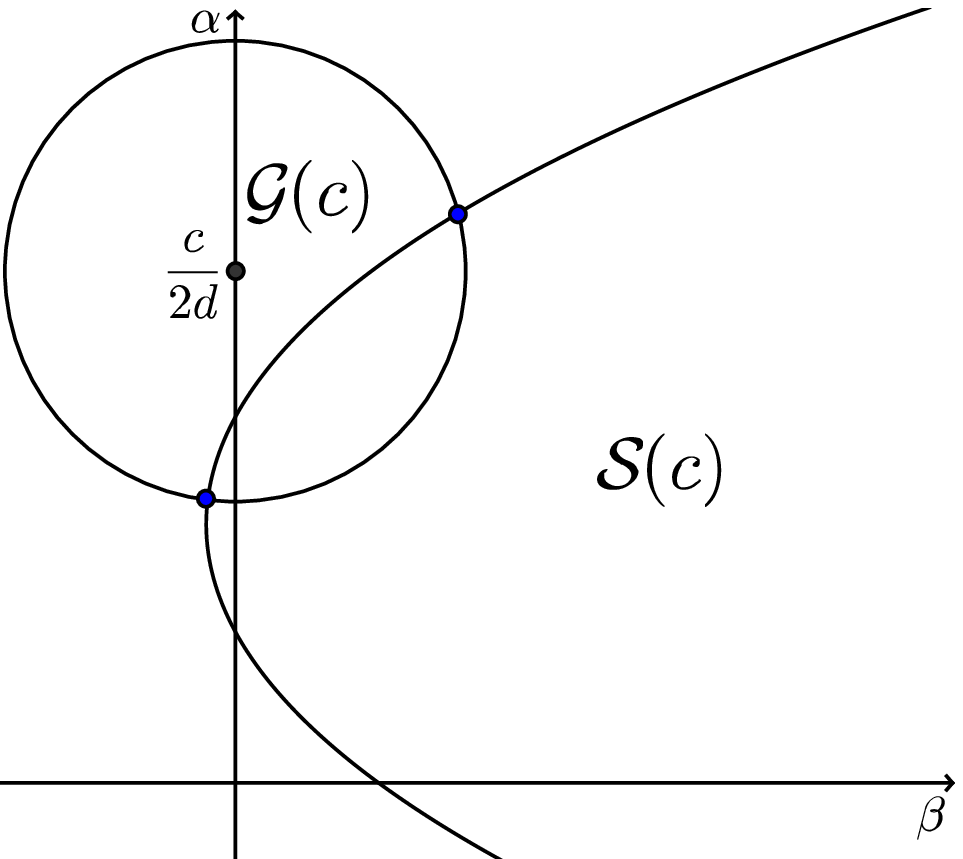}}
 \caption{Case $\frac{c_K}{2d}\notin\mc{S}(c_K)$.}
\bigskip
\includegraphics[width=5cm]{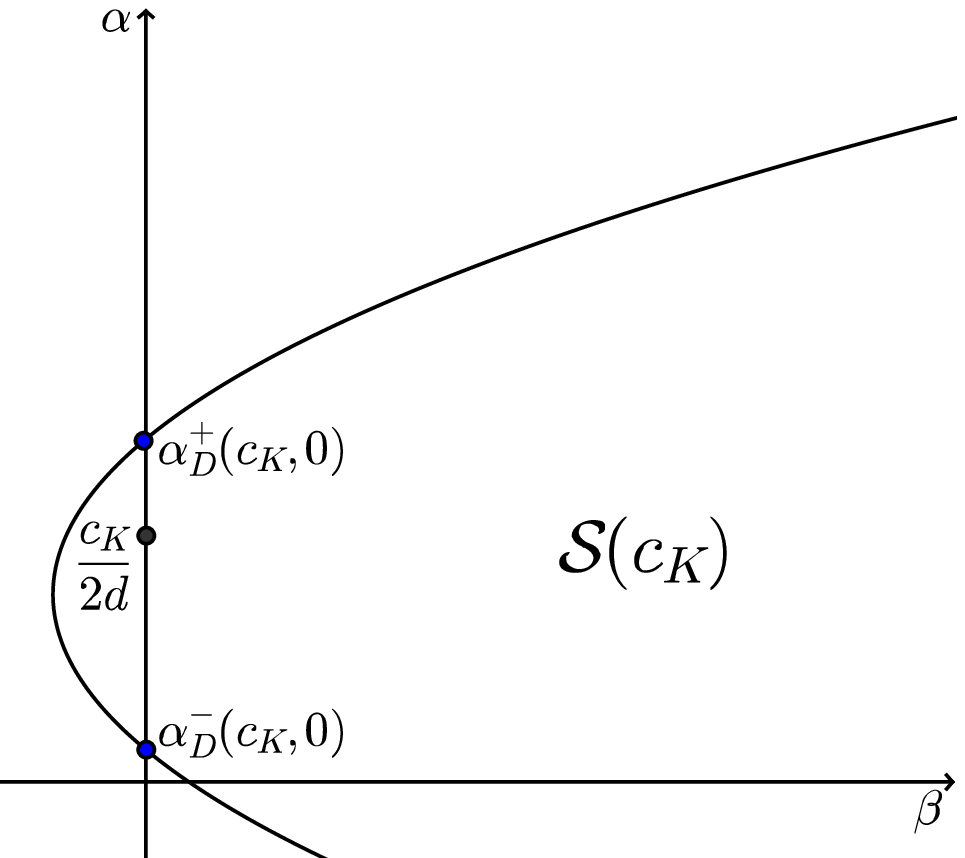}
\caption{Case $\frac{c_K}{2d}\in\mc{S}(c_K)$.}
\label{fig:cK}
 \end{figure}


\subsection{Spreading}\label{sec:spreading}

\begin{proof}[Proof of Theorem \ref{thm:main} part (ii)]
Let $(u,v)$ be the solution of \eqref{Cauchy} starting from a nonnegative,
compactly supported initial datum $(u_0,v_0)\not\equiv(0,0)$.
We prove separately that $(u,v)$ spreads (towards right) at most and then at
least at
the critical velocity of plane waves $w_*$.

Step 1. {\em $(u,v)$ spreads at most with velocity $w_*$.}\\
The definition of $w_*$ implies the existence of an exponential supersolution of
the linearization of \eqref{Cauchy} (thus a supersolution of \eqref{Cauchy}
by \eqref{concave}) of the type \eqref{exp}, with $c=w_*$, $\gamma,\alpha>0$ and
$\beta\geq0$. Since, up to
translation in the direction $e_1$, this
supersolution is above $(u,v)$ at time $t=0$, the comparison principle yields 
$$\lim_{t\to+\infty}\sup_{\su{x>(w_*+\e)t}{y\geq0}}
|(u(x,t),v(x,y,t))|=0.$$
%
%
%
%

Step 2. {\em $(u,v)$ spreads at least with velocity $w_*$.}\\
We need to show that 
$$\forall \e,a>0,\quad\ \lim_{t\to+\infty}\sup_{\su{0\leq x\leq(w_*-\e)t}{
0\leq y<a}}|(u(x,t),v(x,y,t)-(U,V(y))|=0,$$
where $(U,V)$ is the unique positive, bounded, stationary solution of
\eqref{Cauchy}. We first show that 
\begin{equation}\label{local}
\forall \e>0,\ \exists c\in(w_*-\e,w_*), \quad
\lim_{t\to+\infty}(u(x+ct,t),v(x+ct,y,t))=(U,V(y)),
\end{equation}
locally uniformly in $(x,y)\in\ol\Omega$. Then we will conclude applying the
following lemma with $c^-=0$, $c^+=c$.
\begin{lemma}\label{lem:convexity}
Let $c^-<c^+$ be such that any nonnegative, bounded, solution 
$(u,v)\not\equiv(0,0)$ of \eqref{Cauchy} satisfies
$$\lim_{t\to+\infty}(u(x+c^\pm t,t),v(x+c^\pm t,y,t))=(U,V(y)),$$
locally uniformly in $(x,y)\in\ol\Omega$, with $U,V>0$. Then, such
solutions satisfy
%
%
$$\forall a>0,\quad\ \lim_{t\to+\infty}\sup_{\su{c^-t\leq x\leq c^+t}{
0\leq y<a}}|(u(x,t),v(x,y,t)-(U,V(y))|=0.$$
\end{lemma}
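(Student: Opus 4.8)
\textbf{Proof plan for Lemma \ref{lem:convexity}.}

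The plan is to deduce the uniform convergence on the whole moving cone $\{c^-t\le x\le c^+t\}$ from the two "edge" convergences along the rays $x\sim c^\pm t$, by a convexity/sliding argument together with the comparison principle. First I would fix $a>0$ and $\delta>0$ and argue by contradiction: if the conclusion fails, there exist $\e>0$ and a sequence $(x_n,y_n,t_n)$ with $t_n\to+\infty$, $c^-t_n\le x_n\le c^+t_n$, $0\le y_n<a$, and $|(u(x_n,t_n),v(x_n,y_n,t_n))-(U,V(y_n))|\ge\e$. Writing $c_n:=x_n/t_n\in[c^-,c^+]$, up to a subsequence $c_n\to c\in[c^-,c^+]$. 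The idea is to trap $(u,v)$ from below, near the point $(x_n,y_n,t_n)$, by a solution that is forced to be close to $(U,V)$.

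The key step is the construction of the lower barrier. Using the hypothesis at $c^-$ (resp.\ $c^+$), for any $\eta>0$ there is $T_\eta$ and $A_\eta$ such that $(u,v)(x+c^\pm t,y,t)\ge(1-\eta)(U,V(y))$ for $t\ge T_\eta$, $|x|\le A_\eta$, $0\le y\le a$; in other words $(u,v)$ is bounded below by $(1-\eta)(U,V)$ on two moving windows centred on $x=c^-t$ and $x=c^+t$. I would then invoke Proposition \ref{gensub} (the generalised comparison principle) to propagate this lower bound into the interior of the cone: take as a generalised subsolution the ``inf'' (or a suitable spatial interpolation) of translates of the solution itself. Concretely, since the problem is invariant under $x$-translations, the function obtained by starting at a large time from a datum supported in, say, a large ball around $(c^-t_0+Ct_0',0)$ will, by the $c^-$-convergence, converge to $(U,V)$ along \emph{every} ray $x\sim ct$ with $c\in[c^-,c^+]$ provided we also slide using the $c^+$-datum; patching the two via Proposition \ref{gensub} yields a generalised subsolution $(\ul u,\ul v)$ lying below $(u,v)$ at the initial time and converging, along the ray of slope $c$, to the unique stationary state $(U,V)$ given by Theorem \ref{thm:y}. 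By the strong comparison principle (Proposition \ref{comparison}) $(u,v)\ge(\ul u,\ul v)$ for all later times, so $\liminf_n (u,v)(x_n,y_n,t_n)\ge(1-\eta)(U,V(y))$ pointwise; combined with the upper bound $\limsup_{t\to\infty}(u,v)\le(U,V)$ from Lemma \ref{lem:trapped} and the uniqueness of $(U,V)$, and letting $\eta\to0$, this contradicts $|(u(x_n,t_n),v(x_n,y_n,t_n))-(U,V(y_n))|\ge\e$. Since $U$ is constant and $V$ is continuous with $y_n$ ranging in the compact set $[0,a]$, the convergence is in fact uniform in $y$, closing the argument.

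The main obstacle is making the barrier construction genuinely \emph{uniform} in the slope $c\in[c^-,c^+]$: the edge hypotheses only give convergence along the two extreme rays, and one must show that the ``fan'' of translated subsolutions between them leaves no gap. Here the convexity in $c$ of the exponents $\alpha_D^\pm(c,\cdot)$ established in Section \ref{sec:exp} — equivalently, the fact that a single compactly supported bump, once it has invaded along slope $c^-$ and along slope $c^+$, has invaded the entire intermediate sector — is what guarantees that the $\inf$ of the two families is still a subsolution in the sense of Proposition \ref{gensub} and dominates $(u,v)$ in the full cone. Once this uniformity is in hand, the rest is the now-standard combination of Lemma \ref{lem:trapped}, the Liouville-type uniqueness (Theorem \ref{thm:y}), and the comparison principles, exactly as in the corresponding step of \cite{BRR2}.
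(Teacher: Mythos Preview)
Your plan has a genuine gap: the barrier construction you sketch does not work as stated, and the ingredient you identify as ``the main obstacle'' (convexity of the exponents $\alpha_D^\pm(c,\cdot)$) plays no role whatsoever in the proof of this lemma. Proposition~\ref{gensub} concerns the \emph{maximum} of two subsolutions, not an infimum, so ``taking the inf of translates of the solution itself'' does not produce a generalised subsolution; and Section~\ref{sec:exp} establishes monotonicity of $\alpha_D^\pm$ in $c$, not convexity. More fundamentally, Lemma~\ref{lem:convexity} is a purely dynamical statement that uses only the two-ray hypothesis and comparison --- no spectral information about the linearisation is needed, and invoking it is a red herring. Your upper bound via Lemma~\ref{lem:trapped} is also insufficient, since that lemma gives convergence only locally uniformly in $(x,y)$, not along moving frames.

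The paper's argument is far more elementary and is the idea you are missing: a \emph{time-splitting} decomposition. Given $c=(1-\lambda)c^-+\lambda c^+$ with $\lambda\in[1/2,1]$ and $s$ large, write $cs=c^+(\lambda s)+c^-((1-\lambda)s)$. By the $c^+$-hypothesis applied to $(u,v)$, at time $\lambda s$ and near position $c^+\lambda s$ the solution is $\e$-close to $(U,V)$; in particular it lies above the compactly supported datum of an auxiliary solution $(u^2,v^2)$ and below the constant datum of another auxiliary solution $(u^1,v^1)$. Now run comparison from time $\lambda s$ for a further time $(1-\lambda)s$: this sandwiches $(u(cs,s),v(cs,y,s))$ between $(u^2,v^2)$ and $(u^1,v^1)$ evaluated at $(c^-(1-\lambda)s,(1-\lambda)s)$, and the $c^-$-hypothesis applied to \emph{these auxiliary solutions} (which are themselves nonnegative bounded solutions, so the hypothesis covers them) forces both to be $\e$-close to $(U,V)$. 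Exchanging the roles of $c^-$ and $c^+$ handles $\lambda\in[0,1/2]$. No Liouville theorem, no generalised subsolutions, and no exponential analysis are needed here.
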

Let us postpone the proof of Lemma \ref{lem:convexity}.
In order to prove \eqref{local} it is sufficient
to derive the analogue of Lemma \ref{lem:trapped} for the problem in the frame
moving with speed $c$ in the direction $e_1$, for $c$ arbitrarily close to
$w_*$. Then one concludes using Theorem \ref{thm:y} (or Proposition \ref{pro:y}
in the case of \eqref{Cauchy-rho} under the only assumption \eqref{fKPP}).
The only argument in the proof of Lemma \ref{lem:trapped} which is modified by
the change of frame
is the existence of a compactly supported, stationary, strict subsolution $(\ul
U,\ul V)$ for the linearised problem. 
If $w_*=c_K$ then the construction of a function $\ul V$ such that $(0,\ul V)$
satisfies the desired properties is standard (see Lemma 6.2 of \cite{BRR2}).
In the case $w_*>c_K$, the bifurcation analysis of Lemma 6.1 of
\cite{BRR2} applies and provides the subsolution $(\ul U,\ul V)$ for $c$ close
enough to $w_*$. Since this is the core of the argument, for the sake of
completeness we sketch below the proof of that lemma.

One starts with penalizing $f'(0)$ and $g'(0)$ and considering the problem in
the strip $\R\times(0,L)$, with a Dirichlet condition on $\R\times\{L\}$:
\begin{equation}\label{strip}
\begin{cases}
-D\partial_{xx} u+(q-c)\partial_x u=\nu v(x,0,t)+(g'(0)-\delta+\mu)u & x\in\R\\
-d\Delta v-c\partial_x v=(f'(0)-\delta)v & (x,y)\in\R\times(0,L)\\
v(x,L)=0 & x\in\R\\
-d\partial_y v(x,0,t)=\mu u(x)-\nu v(x,0,t) & x\in\R.
\end{cases}
\end{equation}
The presence of $\delta<<1$ can be viewed as a perturbation of the terms
$f'(0)$, $g'(0)$; thus we can continue the discussion with $\delta=0$.
Exponential solutions are sought for in the form
$$e^{-\alpha x}(1,\gamma e^{-\beta y}+
\t\gamma e^{\beta y}).$$
They exist for $(\beta,\alpha)\in\Gamma(c)\cap\Sigma_L(c)$, where $\Gamma(c)$
is the same as in the previous section and $\Sigma_L(c)$ is the union of two
curves $\Sigma^\pm_L(c)$ parametrized by $\alpha=\alpha^\pm_L(c,\beta)$. The
functions $\alpha^\pm_L$ have the same monotonicity in $c$ as the functions
$\alpha^\pm$ defining $\Sigma(c)$; moreover, as $L\to\infty$,
$\alpha^\pm_L\to\alpha^\pm$ locally uniformly in $\beta>0$, together with their
derivatives.
Calling $\mc{S}_L(c)$ the set between $\Sigma^-_L(c)$ and $\Sigma^+_L(c)$, it
follows that $\mc{S}^L(c)\cap\mc{G}(c)\neq\emptyset$ iff $c\geq w_*^L$, with
$w_*^L\to w_*$ as $L\to\infty$. In particular, $w_*^L>c_K$ for $L$ large enough
(and $\delta$ small enough).
By using Rouch\'e's theorem one eventually finds, for $c<w_*^L$ close
enough to $w_*^L$, two values $\alpha,\beta\in\C\backslash\R$ such that the
associated exponential function satisfies \eqref{strip}. Using its real part,
one eventually obtain the compactly supported subsolution $(\ul U,\ul V)$ for
$c$ arbitrarily close to $w_*$.
\end{proof}

\begin{proof}[Proof of Lemma \ref{lem:convexity}]
Let $(u,v)$ be as in the statement of the lemma and fix $a>0$, $0<\e<U$.
Consider
the solution $(u^1,v^1)$ emerging from the initial datum $(\sup u,\sup v)$, and
a solution $(u^2,v^2)\not\equiv(0,0)$ with an
initial datum $(u^2_0,v^2_0)$ satisfying
$$0\leq u^2_0\leq U-\e,\qquad\supp\,u^2_0\subset[-1,1],
\qquad v^2_0\equiv0.$$
By hypothesis, for $a>0$, there exists $T>0$ such that
\begin{equation}\label{eq:sub-super}
\forall t\geq T,\ 0\leq y\leq a,\quad
|(u^i(c^-t,t),v^i(c^-t,y,t))-(U,V(y))|<\e.
\end{equation}
Set $k:=\max(1,|c^-|T)$ and let $T'>0$ be such that
$$\forall t\geq T',\ |x|\leq k,\ 0\leq y\leq a,\quad
|(u(x+c^+t,t),v(x+c^+t,y,t))-(U,V(y))|<\e.$$
For $1/2\leq\lambda\leq1$, call $c:=(1-\lambda)c^-+\lambda c^+$.
Let $s\geq 2T'$. If $(1-\lambda)s\leq T$, writing $cs=
x+c^+t$ with $|x|\leq k$ and $t\geq T'$, we find that
$$\forall y\in[0,a],\quad|(u(ct,t),v(ct,y,t))-(U,V(y))|<\e.$$
Consider the case $(1-\lambda)s>T$. For $(x,y)\in\ol\O$, we see that 
$$u^2_0(x)\leq(U-\e)\mathbbm{1}_{[-1,1]}(x)\leq u(x+c^+\lambda s,\lambda s)\leq
u^1_0,\qquad
v^2_0\leq v(x+c^+\lambda s,y,\lambda s)\leq v^1_0,$$
and then the comparison principle yields
$$u^2(x+(1-\lambda)c^-s,(1-\lambda)s)\leq u(x+c s,s)\leq 
u^1(x+(1-\lambda)c^-s,(1-\lambda)s)$$
$$v^2(x+(1-\lambda)c^-s,y,(1-\lambda)s)\leq v(x+c s,y,s)\leq 
v^1(x+(1-\lambda)c^-s,y,(1-\lambda)s).$$
Whence, by \eqref{eq:sub-super},
$$\forall y\in[0,a],\quad
|(u(c^-s,s),v(c^-s,y,s))-(U,V(y))|<\e.$$
We have therefore shown that $(u,v)$ converges to $(U,V)$ as $t\to+\infty$
uniformly in the set 
$(c^-+c^+)t/2\leq x\leq c^+t$, $0\leq y\leq a$. The proof is completed by
exchanging the roles of $c^-$ and $c^+$.
\end{proof}

\subsection{Spreading velocity}


\begin{proof}[Proof of Theorem \ref{thm:main} part (iii)]
We have shown in the previous section that the asymptotic speed of spreading
coincides with the critical velocity of plane waves $w_*$.
With the same notation as in Section \ref{sec:exp}, we know that
$w_*=c_K$ iff $(0,c_K/2d)\in\mc{S}(c_K)$, that
is, iff $\alpha_D^\pm(c_K,0)$ are real and satisfy
\begin{equation}\label{cKPP}
\alpha_D^-(c_K,0)\leq\frac{c_K}{2d}\leq\alpha_D^+(c_K,0).
\end{equation}
Requiring \eqref{cKPP} is equivalent to
$$\left|\frac{c_K}{2d}-\frac{c_K-q}{2D}\right|\leq\frac1{2D}\sqrt{
(c_K-q)^2-4Dg'(0)},$$
which, in turn, rewrites
$$\left(c_K\left(\frac Dd-1\right)+q\right)^2\leq(c_K-q)^2-4Dg'(0).$$
Notice that this condition automatically implies that $\alpha_D^\pm(c_K,0)$ are
real. Whence, recalling that $c_K=2\sqrt{d f'(0)}$, we find that \eqref{cKPP}
holds iff
$$4\frac Ddf'(0)+4\sqrt{\frac{f'(0)}d}\,q-8f'(0)\leq-4g'(0).$$
This concludes the proof.
\end{proof}


\section{The large diffusion and transport limits of $w_*^\pm$}
\label{sec:limits}

We now prove Theorem \ref{thm:limits}. As before, we focus on the speed towards
right $w_*^+$, recalling that it coincides with the critical wave speed $w_*$
defined
in Section \ref{sec:exp}.
%

\subsection{Large diffusion}

Replacing $c$ with $c=\sqrt D c$ and $\alpha$ with $\alpha/\sqrt D$ reduces
\eqref{2exp} to the following system:
\begin{equation}\label{D=1}
\begin{cases}
\di-\alpha^2+\left(c-\frac q{\sqrt D}\right)\alpha=-\frac{d\beta\mu}{\nu+d\beta}
+g'(0)\\
\di\alpha=\frac1c(f'(0)+d\beta^2)+\frac d{Dc}\alpha^2.
\end{cases}
\end{equation}
The first equation is satisfied for $(\beta,\alpha)\in \Sigma(c-q/\sqrt{D})$,
where $\Sigma=\Sigma^-\cup\Sigma^+$ is the curve defined in Section
\ref{sec:exp} with $D=1$ and $q=0$.
The second equation is that of an ellipse $E_D(c)$ in the $(\beta,\alpha)$
plane, which is above the parabola $P(c)$ of equation
$\alpha=(f'(0)+d\beta^2)/c$. For $c$ close to $0$, $\Sigma(c)$ is below $P(c)$.
Then, increasing $c$, $\Sigma^+(c)$ moves upward while $P(c)$ moves downward and
tends to the $\beta$ axis as $c\to+\infty$. There exists then a positive value
$h$ such that $\Sigma(c)\cap P(c)\neq\emptyset$ if and only if $c\geq h$.
Since $\Sigma(c-q/\sqrt{D})$ and $E_D(c)$ converge locally
uniformly to $\Sigma(c)$ and $P(c)$ respectively as $D\to\infty$, and $\Sigma$
is bounded in the $\alpha$ direction, it follows that if $c<h$ then
\eqref{D=1} has no solution for $D$ large enough, whereas if $c>h$
then \eqref{D=1} has solution for $D$ large enough. Reverting to the original
variable, we
eventually infer that
$$\lim_{D\to\infty}\frac{w_*}{\sqrt D}=h.$$

 \begin{figure}[ht]
 \centering
 \subfigure[$c<h$]
   {\includegraphics[width=4.5cm]{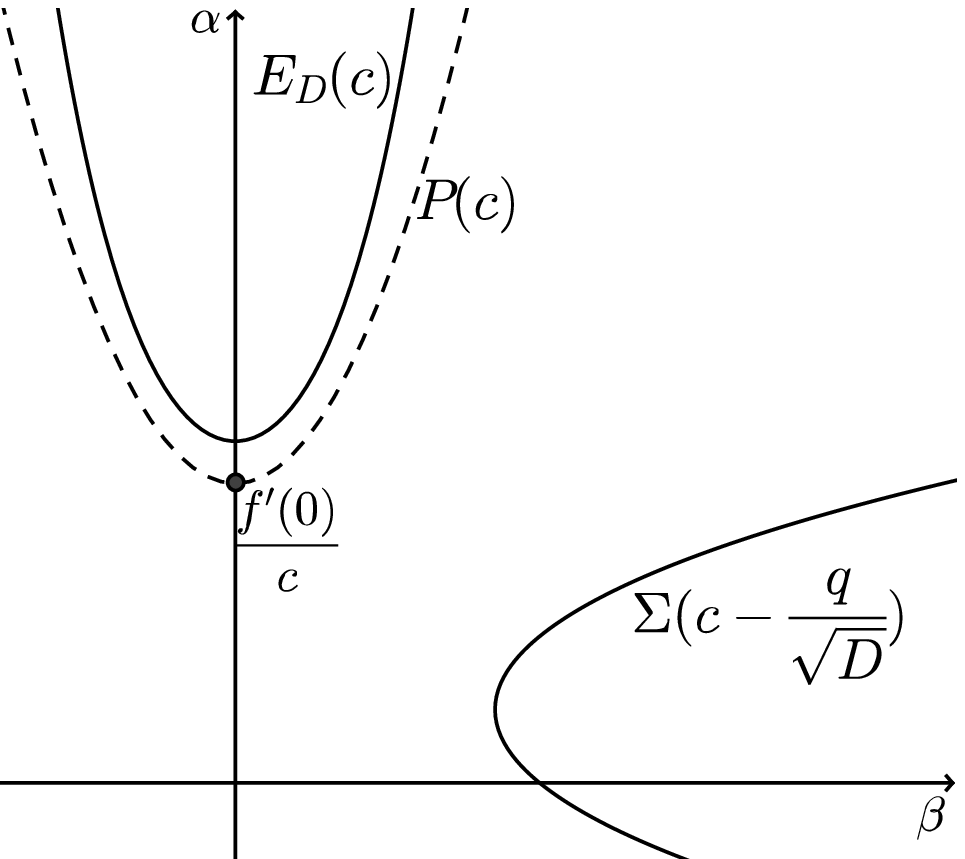}}
 \hspace{3mm}
 \subfigure[$c=h$]
   {\includegraphics[width=4.5cm]{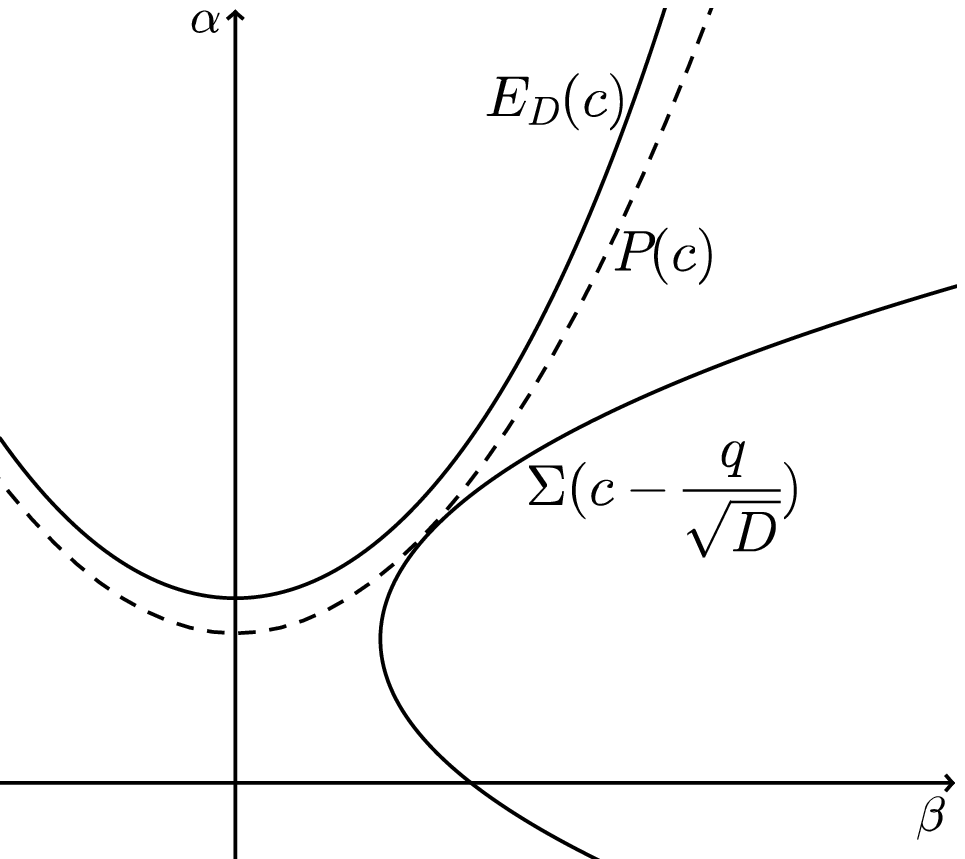}}
 \hspace{3mm}
 \subfigure[$c>h$]
   {\includegraphics[width=4.5cm]{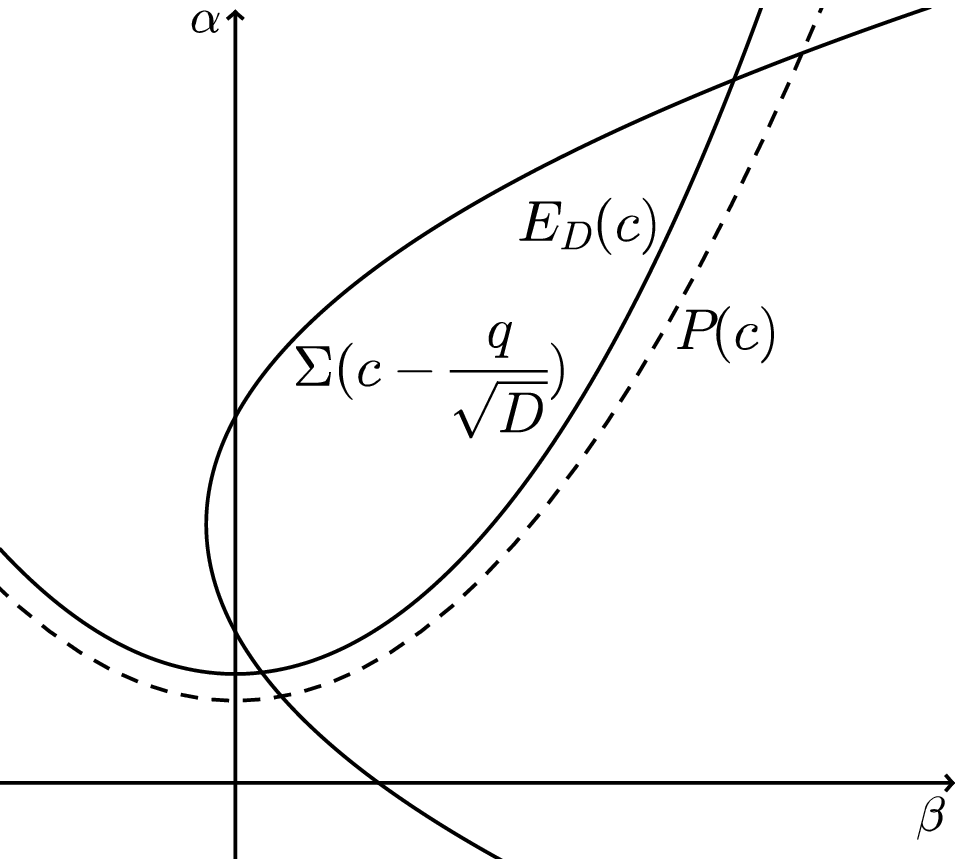}}
\caption{The asymptotics of $w_*$ as $D\to\infty$.}
 \end{figure}

\subsection{Large transport}
\label{sec:largeq}

We consider now $w_*$ as a function of $q$. We know that $w_*=c_K$ for $-q$
large enough. Let us investigate the behaviour as $q\to+\infty$.
Notice first that, as $q\to+\infty$, $\alpha_D^+(c,\beta)\to0$ locally
uniformly in $c$ and uniformly in $\beta\geq0$, from which we deduce that
$w_*\to+\infty$. Let us take $c=\kappa q$ with $\kappa>0$ and $q\to+\infty$. 
The set $\Gamma(\kappa q)$ intersect $\Sigma(\kappa q)$ if and only if there
exists
$\beta\geq0$ such that
%
%
%
%
$$\frac1{2D}\left(q(\kappa-1)+\sqrt{q^2(\kappa-1)^2+4D(\chi(\beta)-g'(0))}
\right)\geq
\frac{\kappa q-\sqrt{\kappa^2q^2-c_K^2-4d^2\beta^2}}{2d}.$$
If $\kappa>1$, this inequality holds for any given $\beta\geq0$, provided $q$ is
large enough. This shows that $\limsup_{q\to+\infty}w_*/q\leq1$.
If $\kappa<1$, the inequality implies that, as $q\to+\infty$,
$$\frac{\chi(\beta)-g'(0)}{1-\kappa}+o(1)\geq
\frac{2f'(0)+2d\beta^2}{\kappa+\sqrt{\kappa^2-(c_K^2+4d^2\beta^2)q^{-2}}}.$$
In particular, $\beta$ cannot diverge as $q\to+\infty$, whence
$$\frac{\chi(\beta)-g'(0)}{1-\kappa}+o(1)\geq
\frac{f'(0)+d\beta^2}{\kappa}.$$
It follows that, if $g'(0)\geq\mu$ and $\kappa<1$,
$\Gamma(\kappa q)\cap\Sigma(\kappa q)=\emptyset$ for $q$ large enough. Instead,
if $g'(0)<\mu$, there is a threshold
value
$$k:=\left(1+\max_{\beta\geq0}\left(\frac{\chi(\beta)-g'(0)}{f'(0)+d\beta^2}
\right)\right)^{-1}<1$$
such that $\Gamma(\kappa q)\cap\Sigma(\kappa q)$ is empty
if $\kappa<k$ and $q$ is large enough and is nonempty if $\kappa>k$ and $q$
is large enough. Theorem \ref{thm:limits} follows.
\begin{remark}
 The computations in both the large diffusion and transport cases show that the
scale at which the limit of $w_*$ as $D,q\to+\infty$ is affected by both terms
is $D\sim q^2$. This was of course expected by dimensional considerations.
\end{remark}

\section*{Acknowledgement}
The research leading to these results has received funding from the European
Research Council under the European Union's Seventh Framework Programme
(FP/2007-2013) / ERC Grant Agreement n.321186 - ReaDi -Reaction-Diffusion
Equations, Propagation and Modelling. Part of this work was done while Henri
Berestycki was visiting the University of Chicago. He was also supported by an
NSF FRG grant DMS - 1065979. Luca Rossi was partially supported by the
Fondazione CaRiPaRo Project ``Nonlinear Partial Differential Equations:
models, analysis, and control-theoretic problems''.

\end{document}